\newtheorem{theorem}{Theorem}[section]
\newtheorem{cor}[theorem]{Corollary}
\newtheorem{lemma}[theorem]{Lemma}
\newtheorem{prop}[theorem]{Proposition}
\theoremstyle{definition}
\newtheorem{define}[theorem]{Definition}
\newtheorem{ex}[theorem]{Example}
\newcommand{\R}{\mathbb{R}}
\newcommand{\C}{\mathbb{C}} 
\newcommand{\N}{\mathbb{N}}
\newcommand{\Z}{{\mathbb Z}}
\renewcommand{\phi}{\varphi}
\newcommand{\field}[1]{\mathbb{#1}}
\begin{document}
\title{Smale space C$^*$-algebras have nonzero projections}
\author{Robin J. Deeley}
\address{Robin J. Deeley,   Department of Mathematics,
University of Colorado Boulder
Campus Box 395,
Boulder, CO 80309-0395, USA }
\email{robin.deeley@colorado.edu}
\author{Magnus Goffeng}
\address{Magnus Goffeng\\
Department of Mathematical Sciences \\
University of Gothenburg and Chalmers University of Technology \\
Chalmers Tv\"argata 3, 412 96 G\"oteborg, Sweden}
\email{goffeng@chalmers.se}
\author{Allan Yashinski}
\address{Allan Yashinski,  Department of Mathematics, University of Maryland, College Park, MD 20742-4015, USA }
\email{ayashins@math.umd.edu}
\date{\today}
\subjclass[2000]{46L35, 37D20}
\begin{abstract}
The main result of the present paper is that the stable and unstable $C^*$-algebras associated to a mixing Smale space always contain nonzero projections. This gives a positive answer to a question of the first listed author and Karen Strung and has implications for the structure of these algebras in light of the Elliott program for simple $C^*$-algebras. Using our main result, we also show that the homoclinic, stable, and unstable algebras each have real rank zero.
\end{abstract}
\maketitle

\section{Introduction}

The starting point for this paper is the following fundamental question: when is a $C^*$-algebra stably isomorphic to a unital $C^*$-algebra? This question is well-studied, as an example see \cite{Bro}. It is equivalent to the existence of a full projection in the stabilization of the given algebra. In the simple case all nonzero projections are full. Hence this question is equivalent to the existence of a nonzero projection in the special, but important, simple case. 

The Elliott program for simple, separable, nuclear $C^*$-algebras is exhausted by three mutually exclusive cases (see \cite[Section 10]{elliotticm} or \cite[Section 2.2]{RorBook}): 
\begin{enumerate}
\item the case where no nonzero projection exists but nonzero traces exist, 
\item the case where nonzero projections and traces exist and 
\item finally the case where nonzero projections exist but nonzero traces do not. 
\end{enumerate}
Given a class of naturally occurring (potentially) classifiable $C^*$-algebras, it is interesting to ask which of the three cases can appear. 
In the unital case, the unit provides a nonzero projection excluding case (1).

The $C^*$-algebras of interest in the present paper are constructed from hyperbolic dynamical systems called Smale spaces. This class of dynamical systems includes many examples, namely, subshifts of finite type, certain solenoids \cite{Wielerpaper, Wil}, certain tiling spaces \cite{AndPut}, certain self-similiar groups, Anosov homeomorphisms (e.g., hyperbolic toral automorphisms), among others. In \cite{DS}, the first listed author and Karen Strung studied the question of whether the stable and unstable Smale space $C^*$-algebras constructed by Putnam \cite{Put} fit into classification. The present paper provides the last missing piece in \cite{DS} thus proving that the stable and unstable Smale space $C^*$-algebras fit into classification. A consequence is that $K$-theoretical invariants capture the same amount of dynamical information about a Smale space as its stable and unstable algebra.

Given any nonwandering Smale space one can associate three $C^*$-algebras, the stable, unstable and homoclinic, see \cite[Section 2]{PutFunProp}. These algebras are each separable, nuclear, stably finite and the main result of \cite{DS} states that they have finite nuclear dimension (see \cite{WinZac} for more on nuclear dimension). Moreover, Smale's decomposition theorem reduces many questions about the nonwandering case to the mixing case, see \cite[Section 2]{PutFunProp} for details. Hence we restrict to the mixing case (except for in Theorem \ref{RRZeroNonWanCase}). In the mixing case, these algebras are each simple. Since these algebras have nonzero traces \cite{Put}, they must lie in either the first or second class in the above list.  Based on examples, the first listed author and Karen Strung asked if the stable and unstable algebras always contain a nonzero projection, see \cite[Theorem 4.8 and Question 4.9]{DS}. 

The main goal of this note is to show that this is the case, see Theorem \ref{mainResult}. As a result, the stable and unstable $C^*$-algebras of a mixing Smale space are always of the second type in the list above. In particular, this implies that the Elloitt invariant for the stable and unstable algebra associated to a mixing Smale space is the following data
\[
(K_0(A), K_0(A)^+, \mathcal{D}_0(A), K_1(A), T(A), r_A: K_0(A)\times T(A) \rightarrow \R)
\]
where $A$ is the relevant $C^*$-algebra, $K_*(A)$ are the $K$-theory groups of $A$, $K_0(A)^+$ is the positive cone, $\mathcal{D}_0(A)$ is the dimension range, $T(A)$ is the cone of positive traces on $A$ and $r_A$ is the natural pairing. For more details see \cite[Section 2.2]{RorBook} (in particular page 28).  The reader can see \cite[Introduction]{DS} for more on the relationship between the classification program and Smale spaces.  

Let us describe the present paper in more detail. A Smale space consists of a compact metric space with a self-homeomorphism $\varphi : X \rightarrow X$ satisfying additional axioms. For the relevant definitions, see Subsection \ref{introtsma} and \cite{PutHom, Rue2}. Consider a (mixing) Smale space $(X, \varphi)$. The stable and unstable $C^*$-algebras associated to $(X, \varphi)$ are defined from an \'etale groupoid defined from the stable and unstable relation on $(X, \varphi)$ and a finite $\varphi$-invariant subset $P$ as in \cite{PutSpi}. The existence of the set $P$ follows from the fact that the set of periodic points is dense in $X$, see \cite{Rue}. A priori these $C^*$-algebras depend on the choice of $P$, but different choices lead to Morita equivalent $C^*$-algebras \cite{PutSpi}. Moreover, for any choice of $P$, these algebras are stable in the $C^*$-algebraic sense \cite{DY} so different choices of $P$ lead to isomorphic algebras. As mentioned above, associated to a (mixing) Smale space there is also the homoclinic $C^*$-algebra, which is unital and has unique trace.

Our proof that the stable and unstable algebras contain nonzero projections involves two main ideas. The first key idea is Putnam's notion of an s/u-bijective pair and the functorial properties of such maps. Using this, one reduces the problem to the existence of projections in the stable algebra of a mixing Smale space with totally disconnected stable sets. The second key idea is that the tensor product of the stable algebra with the unstable algebra always contains a nonzero projection, which using results in \cite{DS} leads to the existence of projections in the stable algebra in the totally disconnected stable sets case.

The proof that there exists projections in the totally disconnected stable sets case is unfortunately rather indirect. Originally our plan was to explicitly construct projections in this case. Indeed, using Wieler's characterization of this class of Smale spaces as standinary inverse limits \cite{Wielerpaper} and the associated $C^*$-algebraic results in \cite{DY}, the problem is reduced to finding projections in a particularly nice Fell algebra, see \cite[Theorem 3.17 and Theorem 5.1]{DY}. We will discuss a number of examples in Section \ref{ExSec}. For now, we note that it was the examples constructed by Farrell and Jones in \cite{FarJon} that led us to consider a more indirect approach to proving the existence of projections, see Example \ref{FarJonExample} for more details. 

Using the existence of projections and results from \cite{DS} and \cite{Ror}, we obtain more information about the $C^*$-algebras of a mixing Smale space. We have (see Corollary \ref{corwithevery}) that the homoclinic algebra has real rank zero. Also, although it was not noted explicitly in \cite{DS}, it follows from work there (along with \cite[Theorem 6.7]{Ror}) that the homoclinic algebra has stable rank one. Furthermore, the stable and unstable algebras also have real rank zero (see Corollary \ref{corwithevery}), stable rank one (via \cite[Theorem 6.7]{Ror} and \cite[Theorem 3.6]{Rie}) and are stably isomorphic to simple unital $C^*$-algebras that are approximately subhomogeneous (see \cite[Theorem 4.8]{DS}). The fact that these algebras have real rank zero and stable rank one implies that their Elliott invariant can be simplified, see \cite[Section 2.2]{RorBook}. Many of these results generalize to the case of a nonwandering Smale space using Smale's decomposition theorem, see for example Theorem \ref{RRZeroNonWanCase}.

The structure of the paper is as follows. In Section \ref{prelimsec} we recall some relevant known results about projections (Subsection \ref{projsubsec}), Smale spaces (Subsection \ref{introtsma}), the associated groupoids/$C^*$-algebras (Subsection \ref{grpCstarSec}), some examples of Smale spaces and their $C^*$-algebras (Subsection \ref{ExSec}), traces on said $C^*$-algebras (Subsection \ref{tracialsubsection}) and the Morita equivalence $S\otimes U\sim_M H$ (Subsection \ref{moritasubssec}). Next the special case of the existence of projections in the stable algebra of a mixing Smale space with totally disconnected stable sets is considered in Section \ref{specialCaseSec}. In Section \ref{projinsmale} we prove the main result (Theorem \ref{mainResult}) stating that Smale space $C^*$-algebras of mixing Smale spaces always admit a full projection and hence are stably unital. In the final section of the paper, the proof of real rank zero is presented. All $C^*$-algebras in this paper are assumed to be separable. In particular, $\mathbb{K}$ denotes the compact operators on a separable, infinite-dimensional Hilbert space.

\section*{Acknowledgments}
The authors thank  Ian Putnam and Karen Strung for interesting and insightful discussions. We also thank the referee for a number of useful suggestions. The second listed author was supported by the Swedish Research Council Grant 2015-00137 and Marie Sklodowska Curie Actions, Cofund, Project INCA 600398. The authors thank the University of Colorado (Meyer Fund), the University of Gothenburg and the University of Hawaii for facilitating this collaboration.

\section{Preliminaries}
\label{prelimsec}

\subsection{Projections}
\label{projsubsec}

We recall some basic facts about projections in $C^*$-algebras. We say that a $C^*$-algebra is stably unital if it is stably isomorphic to a unital $C^*$-algebra. We say that a $C^*$-algebra $A$ admits a nonzero projection if for some $N$, there is a nonzero projection in $M_N(A)$. Equivalently, $A$ admits a nonzero projection if there is a nonzero projection in the stabilization $A\otimes \mathbb{K}$.  Thus, the property of admitting a nonzero projection is preserved under stable isomorphism and Morita equivalence.  Note that every unital $C^*$-algebra admits a nonzero projection.

\begin{lemma}
\label{stablyuni}
Let $A$ be a $C^*$-algebra and $p\in A$ a projection. Then the unital $C^*$-algebra $pAp$ is stably isomorphic to the $C^*$-algebra $\overline{ApA}$. Moreover, if $A$ is simple then $A$ is stably isomorphic to a unital $C^*$-algebra if and only if $A$ admits a nonzero projection. 
\end{lemma}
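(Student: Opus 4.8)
The statement has two parts. For the first part, the plan is to exhibit an explicit Morita equivalence bimodule between $pAp$ and $B := \overline{ApA}$. The natural candidate is the closure $\overline{pA}$ of $pA$ inside $A$ (equivalently, $pB$). This is a left $pAp$-module and a right $B$-module; I would check that it carries $pAp$-valued and $B$-valued inner products given by $\langle p a, p a' \rangle_{pAp} = p a a'^* p$ and $\langle pa, p a' \rangle_B = a^* p a'$, and that these satisfy the imprimitivity bimodule axioms, the crucial points being fullness of each inner product (the $pAp$-valued one is full because $p \in pAp$ is the relevant unit-like element; the $B$-valued one is full because $B$ is by definition the closed span of $ApA \supseteq (pA)^* (pA)$) and the compatibility identity $\langle x, y\rangle_{pAp}\, z = x\, \langle y, z\rangle_B$. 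Once $pAp$ and $B$ are Morita equivalent, they are stably isomorphic: here I would invoke the Brown–Green–Rieffel theorem, which applies since all $C^*$-algebras in the paper are separable, so that $\sigma$-unitality is automatic; alternatively, since $pAp$ is unital one can use Brown's theorem directly, realizing $pAp$ as a full corner in $B \otimes \mathbb{K}$.

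For the second part, assume $A$ is simple. One direction is immediate from the remarks preceding the lemma: a unital $C^*$-algebra admits a nonzero projection, and admitting a nonzero projection is preserved under stable isomorphism, so if $A$ is stably isomorphic to a unital algebra then $A$ admits a nonzero projection. For the converse, suppose $A$ admits a nonzero projection; after passing to $A \otimes \mathbb{K}$ (which is stably isomorphic to $A$ and still simple) we may assume there is a nonzero projection $p \in A$ itself. Then $pAp$ is a nonzero unital $C^*$-algebra, and by the first part $A' := \overline{ApA}$ is stably isomorphic to $pAp$. The final point is that simplicity of $A$ forces $\overline{ApA} = A$: $\overline{ApA}$ is a nonzero closed two-sided ideal of $A$ (nonzero because it contains $p \neq 0$), hence all of $A$. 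Therefore $A$ is stably isomorphic to the unital algebra $pAp$, as desired.

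The only genuinely substantive step is verifying the imprimitivity bimodule axioms in the first part; the rest is assembling standard facts (Brown–Green–Rieffel, simplicity killing ideals, stable invariance of "admits a nonzero projection"). Even that step is essentially the textbook fact that a projection in a $C^*$-algebra gives a Morita equivalence between the corner it cuts and the ideal it generates, so I do not expect any real obstacle — the main care needed is bookkeeping with the two inner products and their fullness. I would likely just cite this bimodule construction (e.g. from Raeburn–Williams or Lance) rather than writing it out in full.
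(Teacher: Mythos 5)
Your proposal is correct and follows essentially the same route as the paper: the paper also obtains the first part from the Morita equivalence $pAp\sim_M \overline{ApA}$ implemented by $pA$ (with stable isomorphism then following from Brown--Green--Rieffel, separability being a standing assumption), and proves the second part by reducing to a projection $p\in A$ and noting that simplicity forces $\overline{ApA}=A$. Your write-up merely supplies more detail on the imprimitivity bimodule axioms than the paper, which states this step as ``clear.''
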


\begin{proof} 
The first part is clear from the fact that $pA$ defines a Morita equivalence $pAp\sim_M  \overline{ApA}$. We remark that $\overline{ApA}$ is a closed two-sided ideal in $A$, and the first part of the lemma implies that $\overline{ApA}$ is stably unital. For the second part, we can assume without loss of generality that $A$ is stable.  Then it follows from the first part because $\overline{ApA}\lhd A$ being a closed nonzero two-sided ideal in a simple $C^*$-algebra implies $\overline{ApA}=A$.
\end{proof}

%

\begin{lemma}
\label{directlimitsandproj}
Suppose that the $C^*$-algebra $A$ is the direct limit of a direct system $(A_\alpha)_{\alpha\in I}$ of $C^*$-algebras with injective structure maps. Then $A$ admits a nonzero projection if and only if there is an $\alpha$ such that $A_\alpha$ admits a nonzero projection.  
\end{lemma}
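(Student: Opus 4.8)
The plan is to prove the two directions separately, with the substantive content lying in the "only if" direction. For the "if" direction, suppose some $A_\alpha$ admits a nonzero projection $p \in M_N(A_\alpha)$. Let $\phi_\alpha : A_\alpha \to A$ denote the canonical map into the direct limit; since the structure maps are injective, $\phi_\alpha$ is injective (this is the standard fact that a direct limit of a system with injective connecting maps embeds each term). A $*$-homomorphism sends projections to projections, and an injective $*$-homomorphism is isometric, hence sends a nonzero projection to a nonzero projection. Applying $\phi_\alpha \otimes \mathrm{id}_{M_N}$ to $p$ yields a nonzero projection in $M_N(A)$, so $A$ admits a nonzero projection.

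For the "only if" direction, suppose $A$ admits a nonzero projection; after passing to matrix amplifications (which commute with direct limits, i.e. $M_N(\varinjlim A_\alpha) = \varinjlim M_N(A_\alpha)$ with injective structure maps) we may assume there is a nonzero projection $p \in A = \overline{\bigcup_\alpha \phi_\alpha(A_\alpha)}$. First I would choose, for $\varepsilon = 1/4$ say, an index $\alpha$ and an element $b \in A_\alpha$ with $\|\phi_\alpha(b) - p\| < \varepsilon$; replacing $b$ by $(b + b^*)/2$ we may take $b$ self-adjoint, since $p$ is self-adjoint and $\phi_\alpha$ is a $*$-homomorphism. Now $a := \phi_\alpha(b)$ is a self-adjoint element of $A$ with $\|a - p\| < 1/4$, so the spectrum of $a$ is contained in $(-1/4, 1/4) \cup (3/4, 5/4)$, and in particular is disjoint from $\{1/2\}$. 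The key step is then a standard functional-calculus argument: let $\chi$ be the characteristic function of $(1/2, \infty)$, which is continuous on a neighborhood of $\mathrm{sp}(a)$; then $\chi$ restricted to a suitable neighborhood can be uniformly approximated on $\mathrm{sp}(b) = \mathrm{sp}(a)$ (note $\mathrm{sp}_{A_\alpha}(b) \supseteq \mathrm{sp}_A(a)$, but we only need the relevant spectral gap, which persists) — more carefully, choose a continuous function $f : \R \to \R$ that is identically $0$ on $(-\infty, 1/4]$ and identically $1$ on $[3/4, \infty)$; then $f(b) \in A_\alpha$ is a self-adjoint element with $f(b)^2 = f(b)$ up to the extent that $b$'s spectrum meets the "transition region" $(1/4, 3/4)$. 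The cleanest route is: since $f^2 = f$ on $\mathrm{sp}_A(a) \subseteq (-1/4,1/4)\cup(3/4,5/4)$, we have $f(a)^2 = f(a)$, i.e. $f(a) = \phi_\alpha(f(b))$ is already a genuine projection in $A$, and it is nonzero because $f(a) = p$ (as $f = \chi_{\{1\}}$-valued near $\mathrm{sp}(p) = \{0,1\}$ and $\|f(a) - f(p)\| $ is controlled — indeed $f(p) = p$). So $q := f(b) \in A_\alpha$ satisfies $\phi_\alpha(q) = p \neq 0$, hence $q \neq 0$, and $\phi_\alpha(q^2 - q) = p^2 - p = 0$; here is where I must be slightly careful, since $q^2 - q$ need not vanish in $A_\alpha$, only its image does.

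To handle that last point cleanly, I would instead argue as follows: enlarging $\alpha$ if necessary, make the approximation $\|\phi_\alpha(b) - p\|$ as small as we like, say $< \delta$ with $\delta$ to be chosen. Then $\|\phi_\alpha(b^2 - b)\| = \|\phi_\alpha(b)^2 - \phi_\alpha(b)\| \le \|\phi_\alpha(b)^2 - p^2\| + \|p - \phi_\alpha(b)\| \le (2\|p\| + \delta)\delta + \delta$, which is small. Since $\phi_\alpha(b^2-b)$ is small in norm and the direct limit satisfies $\|\phi_\alpha(c)\| = \lim_{\beta \geq \alpha} \|\rho_{\alpha\beta}(c)\|$ (where $\rho_{\alpha\beta}$ are the connecting maps), there is $\beta \geq \alpha$ with $\|\rho_{\alpha\beta}(b^2 - b)\|$ small; set $b' := \rho_{\alpha\beta}(b) \in A_\beta$, a self-adjoint element with $\|b'^2 - b'\|$ small, hence with $\mathrm{sp}(b')$ bounded away from $1/2$. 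Applying the continuous function $f$ above gives a genuine projection $q := f(b') \in A_\beta$, and $\|\phi_\beta(q) - p\| = \|f(\phi_\beta(b')) - f(p)\|$ is controlled by continuity of functional calculus (for $\delta$ small enough the relevant spectra lie in a common compact set on which $f$ is, say, $1/2$-Lipschitz, giving $\|\phi_\beta(q) - p\| < 1$); two projections at distance $< 1$ are Murray–von Neumann equivalent, so $\phi_\beta(q)$ is nonzero, whence $q \neq 0$ and $A_\beta$ admits a nonzero projection. The main obstacle is purely bookkeeping: making sure the functional calculus is applied to an element whose square-minus-itself genuinely vanishes (or is genuinely small) \emph{inside} some $A_\beta$, rather than only after mapping to $A$; once one passes far enough along the system this is automatic, and the rest is the routine "projections within distance $1$ are equivalent" lemma.
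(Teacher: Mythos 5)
Your argument is correct and is exactly the standard approximation-plus-functional-calculus argument that the paper invokes by citing Blackadar (the paper simply outsources the details you write out, noting that the algebraic direct limit is a local $C^*$-algebra). The only comment is that the worry in your middle paragraph, and the resulting detour to a larger index $\beta$, is unnecessary: since the structure maps are injective, hence isometric and spectrum-preserving, one already has $\|b^2-b\|=\|\phi_\alpha(b)^2-\phi_\alpha(b)\|$ small in $A_\alpha$ itself, so $f(b)\in A_\alpha$ is a genuine projection with no further passage along the system.
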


\begin{proof}
If $A_\alpha$ admits a nonzero projection, it is clear that $A$ also does because the structure maps are injective. Conversely, assume that there is a nonzero projection $p\in A$.  The algebraic direct limit $\mathcal{A}\subseteq A$ of $(A_\alpha)_{\alpha\in I}$ is a local $C^*$-algebra (see \cite[Definition 3.1.1 and Theorem 3.3.2]{BlaKth}) so by a standard approximation and functional calculus argument, e.g. see \cite[Proposition 4.5.1]{BlaKth}, there is a nearby nonzero projection $p_0 \in A_\alpha$ for some $\alpha$.
The same argument generalizes to the case that $p$ belongs to a matrix algebra $M_N(A)=\varinjlim M_N(A_\alpha)$. 
\end{proof}

We recall two definitions from $C^*$-algebra theory:
\begin{enumerate}
\item A $C^*$-algebra is called  subhomogeneous (see \cite[page 62]{RorEncBook}) if it is isomorphic to a $C^*$-subalgebra of $M_l(C_0(Y))$ for some $l\in \N$ and some locally compact Hausdorff space $Y$.
\item A $C^*$-algebra is called approximately subhomogeneous (see \cite[page 62]{RorEncBook}) if it is a direct limit of subhomogeneous $C^*$-algebras.
\end{enumerate}

\begin{theorem} 
\label{tensorSubHom}
Suppose $A$ and $B$ are $C^*$-algebras with $B$ an approximately subhomogeneous $C^*$-algebra. If $A\otimes B$ contains a nonzero projection, then $A$ admits a nonzero projection.
\end{theorem}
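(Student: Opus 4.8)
The plan is to reduce to the case where $B$ is subhomogeneous by a direct limit argument, then to analyze the trace-like structure of a subhomogeneous algebra directly. First I would note that if $B$ is approximately subhomogeneous, write $B = \varinjlim B_n$ with each $B_n$ subhomogeneous and structure maps not necessarily injective; then $A \otimes B = \varinjlim (A \otimes B_n)$ since $-\otimes A$ preserves inductive limits (the spatial tensor product is exact, or at least continuous with respect to inductive limits). A nonzero projection in $A \otimes B$ can be approximated by, and hence (by the functional-calculus argument of Lemma \ref{directlimitsandproj}) produced from, a nonzero projection in some $A \otimes B_n$. So it suffices to prove the statement when $B$ itself is subhomogeneous, i.e. $B \hookrightarrow M_l(C_0(Y))$.

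Next, passing to matrices I may absorb the $M_l$ (replacing $A$ by $M_l(A)$, which admits a nonzero projection iff $A$ does) and regard $B$ as a $C^*$-subalgebra of $C_0(Y)$; more usefully, I can regard $A \otimes B$ as a subalgebra of $A \otimes C_0(Y) = C_0(Y, A)$, the $A$-valued continuous functions on $Y$ vanishing at infinity. The key observation is then pointwise evaluation: if $p \in M_N(A\otimes B) \subseteq C_0(Y, M_N(A))$ is a nonzero projection, then for each $y \in Y$ the element $p(y) \in M_N(A)$ is a projection, and since $p \neq 0$ there is some $y_0$ with $p(y_0) \neq 0$. That $p(y_0)$ is a nonzero projection in $M_N(A)$, so $A$ admits a nonzero projection. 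The only subtlety is that "evaluation at $y_0$" is a $*$-homomorphism $C_0(Y, M_N(A)) \to M_N(A)$ — this is standard — and that its restriction to the subalgebra $M_N(A \otimes B)$ still sends our projection to a projection, which is automatic since $*$-homomorphisms preserve projections.

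I expect the main (though still routine) obstacle to be the bookkeeping in the reduction step: verifying that $A \otimes (\varinjlim B_n) = \varinjlim (A \otimes B_n)$ and that a nonzero projection in the limit descends to a nonzero projection at a finite stage. The first point is continuity of the (minimal) tensor product under inductive limits, and the second is exactly Lemma \ref{directlimitsandproj} once one observes that the structure maps $A\otimes B_n \to A \otimes B_{n+1}$ need not be injective — but this is harmless, because one may replace the system $(A\otimes B_n)$ by its images in $A\otimes B$, which form a system with injective maps and the same limit. After that reduction the heart of the argument is the one-line evaluation observation above.
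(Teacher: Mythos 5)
Your proposal is correct and follows essentially the same route as the paper's proof: reduce to a single subhomogeneous stage via Lemma \ref{directlimitsandproj} (the paper arranges injective connecting maps by noting that quotients of subhomogeneous algebras are subhomogeneous, which is the same maneuver as your passage to images), then identify $A\otimes M_l(C_0(Y))$ with $C_0(Y,M_l(A))$ and evaluate the nonzero projection at a point where it does not vanish. No substantive differences.
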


\begin{proof}
Since the quotient of a subhomogeneous $C^*$-algebra is also subhomogeneous, we can assume that the connecting maps in the direct limit are injective. By Lemma \ref{directlimitsandproj} and the definition of subhomogeneous, $A\otimes B_k$ contains a nonzero projection where $B_k$ is isomorphic to a $C^*$-subalgebra of $M_l(C_0(X_k))$ for some $l\in \N$ and some locally compact Hausdorff space $X_k$. Hence $A\otimes M_l(C_0(X_k))$ contains a nonzero projection. We note that $A\otimes M_l(C_0(X_k)) \cong C_0(X_k, M_l(A))$. Since $q\neq 0$, there exists $x\in X_k$ such that $q(x)\in M_l(A)$ is a nonzero projection. This completes the proof.
\end{proof}

\subsection{Smale spaces}
\label{introtsma}

The theory of Smale spaces and $C^*$-algebras was initiated by Ruelle \cite{Rue, Rue2} and developed further by Putnam and coauthors \cite{PutSpi,Put, PutHom}. We recall the basic definitions and constructions, and refer the reader to the above mentioned sources for details. 

\begin{define}[Section 2.1, \cite{PutHom}]
\label{SmaSpaDef}
A Smale space $(X,\varphi)$ consists of a compact metric space $(X, d)$ and a homeomorphism $\varphi: X \to X$ such that there exists constants $ \epsilon_{X} > 0, 0<\lambda < 1$ and a continuous partially defined map:
\[ [ \, \cdot \, , \, \cdot \, ]: \{(x,y) \in X \times X \mid d(x,y) \leq \epsilon_{X}\} \to X,\qquad (x,y) \mapsto [x,y] \]
satisfying the axioms listed in \cite[Section 2.1]{PutHom}.
\end{define}

The ``bracket map" $[ \, \cdot \, , \, \cdot \, ]$ in the definition of a Smale space is unique (provided it exists). 
Suppose $(X, \varphi)$ is a Smale space, and let $x \in X$, $0<\epsilon\le \epsilon_X$, and $Y, Z \subset X$. The dynamical structure is studied by means of the following sets:
\begin{enumerate}
\item $X^s(x, \varepsilon)  :=  \left\{ y \in X \mid d(x,y) < \varepsilon, [y,x]=x \right\},$
\item $X^u(x, \varepsilon)  :=  \left\{ y \in X \mid d(x,y) < \varepsilon, [x,y]=x \right\}, $
\item $X^s(x)  :=  \left\{ y\in X \mid \lim_{n \rightarrow + \infty} d(\varphi^n(x), \varphi^n(y)) =0 \right\}, $
\item $X^u(x)   :=  \left\{ y\in X \mid \lim_{n \rightarrow - \infty} d(\varphi^n(x), \varphi^n(y)) =0 \right\},$
\item $X^s(Z) := \cup_{x\in Z} X^s(x)$,
\item $X^u(Z):=\cup_{x\in Z} X^u(x)$,
\item $X^h(Y, Z) := X^s(Y) \cap X^u(Z)$ and
\item $X^h(Y):= X^h(Y, Y)$.
\end{enumerate}
One writes $x \sim_s y$ when $y \in X^s(x)$ and $x \sim_u y$, when $y \in X^u(x)$. It follows from the Smale space axioms that $X^s(x,\epsilon) \subseteq X^s(x)$.  The space $X^s(x)$ admits a metrizable locally compact Hausdorff topology defined from equipping the local stable sets $X^s(x,\epsilon)\subseteq X$ with the subspace topology and using $X^s(y,\epsilon)\subseteq X^s(x)$ (where $0< \epsilon \le \epsilon_X$ and $y$ varies over the elements in the stable equivalence class of $x$) as a base for the topology on $X^s(x)$. Note that this topology is not the same as the subspace topology that $X^s(x)$ inherits from $X$.  The analogous construction using local unstable sets topologizes $X^u(x)$. To avoid certain trivial cases, $X$ is always assumed to be infinite. 

\begin{define} 
\label{MixDef}
A Smale space $(X, \varphi)$ is mixing if for any ordered pair of nonempty open sets, $(U,V)$, there exists $N\in \N$ such that for any $n\ge N$, $\varphi^n(U) \cap V \neq \emptyset$.
\end{define}

A factor map $f:(Y,\psi)\to (X,\phi)$ is a continuous surjective map compatible with the dynamics (i.e. $f\circ \psi=\phi\circ f$). Such an $f$ is said to be \emph{$s$-bijective} if $f$ induces a bijection on the stable sets $Y^s(y)\to X^s(f(y))$ (see \cite[Definition 2.5.5]{PutHom}). Similarly, $f$ is u-bijective if the induced mapping $Y^u(y)\to X^u(f(y))$ is a bijection. 

With the exception of Theorem \ref{RRZeroNonWanCase}, all Smale spaces in this paper are assumed to be mixing. In particular, \cite[Theorem 2.5.8]{PutHom} implies that the notions of s-bijective (resp. u-bijective) are equivalent to Fried's notions of s-resolving (resp. u-resolving). 



\subsection{Groupoids and $C^*$-algebras associated to a Smale space} 
\label{grpCstarSec} 

Given a (mixing) Smale space $(X, \varphi)$, following the constructions in \cite{Put, PutSpi, Rue2}, one can form three groupoid $C^*$-algebras. By definition the homoclinic groupoid is
\[ G_h(X, \varphi):= \{ (x,y) \: | \: x\sim_s y \hbox{ and } x\sim_u y\}, \]
with unit space $X$. There is an \'etale topology on this groupoid, see \cite{Put, Rue2}. Moreover, $G_h(X, \varphi)$ is also amenable and the associated $C^*$-algebra is denoted by $H(X, \varphi)$ or by $H$ if the Smale space is clear from the context. This algebra is called the homoclinic algebra. 

The stable and unstable groupoids associated to $(X, \varphi)$ are constructed as follows. Let $P$  be a finite $\varphi$-invariant set of periodic points of $(X, \varphi)$. Define the stable groupoid
\[
G_s(P):= \{ (x, y) \in X \times X  \mid x \sim_s y \hbox{ and } x, y \in X^u(P)\}.
\]
Using results from \cite{PutSpi}, $G_s(P)$ can be given a locally compact \'etale topology over the unit space $X^u(P)$. Moreover, still following \cite{PutSpi}, one constructs the $C^*$-algebra algebra associated to this groupoid (denoted by $C^*(G_s(P))$) by noticing that the groupoid is amenable and taking the closure of an explicit representation of the compactly supported functions, $C_c(G_s(P))$, on the Hilbert space $l^2(X^h(P))$. If the Smale space and set of periodic points is clear from the context, the stable algebra is denoted by $S$. On the other hand, when working with a number of Smale spaces we include the particular Smale space in the notation; that is we denote the associated stable groupoid by $G_s(X, \varphi;P)$ and the associated $C^*$-algebra by $C^*(G_s(X, \varphi;P))$. 

A quick way to define the unstable groupoid is via $G_u(X, \varphi;P):=G_s(X, \varphi^{-1};P)$. However, we note that 
\[ G_u(X, \varphi, P)=\{ (x, y) \in X \times X  \mid x \sim_u y \hbox{ and } x, y \in X^s(P)\}. \]
The unstable $C^*$-algebra is denoted by $C^*(G_u(X, \varphi; P))$ (or by simply $U$). By construction, the stable and unstable algebra each have a representation on the Hilbert space $l^2(X^h(P))$. 



\subsection{Examples of Smale spaces and their associated $C^*$-algebras}
\label{ExSec}

A number of examples of Smale spaces are discussed. Our choice of examples is in part based on the fact that the stable algebra associated to a Smale space with totally disconnected stable sets plays an important role in the present paper. All the examples except the last have totally disconnected stable sets.

Although, our discussion of each example is rather brief we hope to give some context to the main problems considered here, namely the existence of projections and real rank zero for Smale space $C^*$-algebras. 

\begin{ex}
Two-sided subshifts of finite type provide a prototypical example of Smale spaces. The reader can see \cite{LM} for details on subshifts of finite type. The stable, unstable and homoclinic $C^*$-algebra associated to subshifts of finite type are each approximately finite (AF) \cite[pages 25--26]{Put} and hence each have real rank zero as well as admitting an abundance of projections.  
\end{ex}

\begin{ex}
Take $Y=S^1 \subseteq \C$ with the arc length metric, rescaled so that the total circumference is one.  For a fixed integer $n > 1$, define
$g: S^1 \rightarrow S^1$ via $z \mapsto z^n$. Then 
\[
X:= \varprojlim (Y, g) = \{ (y_n)_{n\in \N} = (y_0, y_1, y_2, \ldots ) \: | \: g(y_{i+1})=y_i \hbox{ for each }i\ge0 \}
\]
with $\varphi$ defined via
\[
(y_0, y_1, y_2, \ldots )  \mapsto (g(y_0), g(y_1), g(y_2), \ldots)
\]
is a mixing Smale space. The stable algebra in this case is a Bunce--Deddens algebra tensored with the compact operators. For details in the case $n=2$, see page 28 of \cite{Put}. It follows that the stable algebra was known to have real rank zero. 
\end{ex}

\begin{ex}
The construction in the previous example can be generalized in a number of ways. For example, one natural generalization is as follows. As input we take $(Y,g)$ where $Y$ is a compact metric space and $g:Y \rightarrow Y$ is an expansive surjective local homeomorphism. The output is 
\[
X:= \varprojlim (Y, g) = \{ (y_n)_{n\in \N} = (y_0, y_1, y_2, \ldots ) \: | \: g(y_{i+1})=y_i \hbox{ for each }i\ge0 \}
\]
with $\varphi$ defined via
\[
(y_0, y_1, y_2, \ldots )  \mapsto (g(y_0), g(y_1), g(y_2), \ldots).
\]
If $Y$ is connected and $g$ has a dense orbit, then $(X, \varphi)$ is a mixing Smale space. Moreover, the stable equivalence relation is related to the inverse limit in a nice way:  $(y_n)_{n\in \N} \sim_s (z_n)_{n\in \N}$ if and only if there exists $k\in \N$ such that $g^k(y_0)=g^k(z_0)$.

Based on this, we define an open subrelation of the stable groupoid by taking $(y_n)_{n\in \N} \sim_0 (z_n)_{n\in \N}$ if and only if $y_0=z_0$. For details see \cite[Theorem 3.17 and Example 5.3]{DY} in particular to show that $\sim_0$ is open it is important that $g$ is a local homeomorphism. The $C^*$-algebra associated to $\sim_0$ is isomorphic to $C(Y)\otimes \mathbb{K}$. Hence $C(Y)\otimes \mathbb{K}$ is a subalgebra of the stable algebra. This construction provides many explicit projections in the stable algebra of this class of examples; one takes $1_Y \otimes p$ where $p$ is an projection in $\mathbb{K}$. It is worth noting that the construction of these projections is related to the global structure of the dynamical system. Namely, the map $pX \rightarrow Y$, $(y_n)_{n\in \N} \mapsto y_0$ gives $X$ the structure of a fiber bundle over $Y$ with Cantor set fibers.
\end{ex}

The construction in the previous example includes a large class of Smale spaces with totally disconnected stable sets, such as the self-similar groups considered in \cite{Nek} (also see \cite{DGMW} for more examples). Nevertheless there are many Smale spaces with totally disconnected stable sets for which one must weaken the conditions on the map $g$. In particular, Wieler \cite{Wielerpaper} has shown that every mixing Smale space with totally disconnected stable sets is a solenoid. That is, the space $X$ is obtained via a stationary inverse limit construction as in the previous example. However, the map $g$ is in general not a local homeomorphism rather it is required to satisfy two natural axioms \cite[page 2068]{Wielerpaper}. Wieler's work builds on work of Williams \cite{Wil}. The next two classes of examples fit within Wieler's framework but cannot be constructed as a solenoid where $g$ is taken to be a local homeomorphism. Perhaps most importantly, the relation $\sim_0$ as defined in the previous example is a subrelation but is not open, see \cite[Section 3]{DY} for a detailed discussion of this fact.

\begin{ex}
Results in \cite{AndPut} link tilings space theory with Smale space theory. In particular, the construction of the relevant inverse limit in this case is given in \cite[Section 4]{AndPut}. Computations of the $K$-theory groups of the stable algebra for a number of tiling space examples can be found in \cite{Gon2, GonRamSol}. However, before the present paper, it was not known that the stable and homoclinic algebras associated to such tilings always have real rank zero. It was known that the unstable algebra always has real rank zero using different methods than the ones used in the present paper.
\end{ex}

\begin{ex} \label{FarJonExample}
In \cite{FarJon}, Farrell and Jones constructed very interesting examples that fit within Wieler's framework and hence give mixing Smale spaces with totally disconnected stable sets. At present the $C^*$-algebras associated to these examples are poorly understood. For example, their $K$-theory has not been computed. An important issue in understanding these $C^*$-algebras is the failure of these dynamical systems to be fiber bundles over a manifold with Cantor set fibers, see \cite[Corollary 0.3]{FarJon}. This lack of global structure makes the construction of projections difficult. Indeed, before this paper it was not known whether the stable algebra associated to these examples even had non-zero projections.
\end{ex}

\begin{ex}
Let $X$ be the $n$-fold cartesian product of circles (i.e., $X=S^1\times S^1 \times \cdots \times S^1$) and $A$ be an $n$ by $n$ matrix with integer entries that satisfies the following:
\begin{enumerate}
\item $|\det(A)| =1$;
\item no eigenvalue of $A$ has modulus one.
 \end{enumerate}
A specific example when $n=2$ is
\[
 A = \left( \begin{array}{cc} 1 & 1 \\ 1 & 0  \end{array} \right).
\]
The system $(X, \varphi)$ where $\varphi$ is given by multiplication by $A$ is a mixing Smale space and this class of examples are called hyperbolic toral automorphisms. They are examples of Anosov diffeomorphism, see for example \cite[Section 2]{Put}.

For hyperbolic toral automorphisms with $n=2$ the associated $C^*$-algebras are (up to tensoring with the compacts in the case of the stable and unstable algebras) irrational rotation $C^*$-algebras, see page 26--27 in \cite{Put}. Hence it was known that they have real rank zero. However, in the full generality of Anosov diffeomorphisms this was not known but now follows from the main results of the present paper. 
\end{ex}

\subsection{Traces on $S$, $U$ and $H$}
\label{tracialsubsection}

The algebras $S$, $U$ and $H$ have traces defined in \cite{Put}. We shall denote these by $\tau_S$, $\tau_U$ and $\tau_H$, respectively. The trace $\tau_H$ is a tracial state while $\tau_S$ and $\tau_U$ are positive lower semicontinuous tracial weights. 

Let us briefly recall their constructions. Following \cite{Put}, we let $\mu$ denote the Bowen measure on $X$. The Bowen measure is the unique $\varphi$-invariant probability measure that maximizes entropy. The tracial state $\tau_H$ is defined on $f\in C_c(G_h)$ as 
$$\tau_H(f):=\int_X f(x,x)\ \mathrm{d}\mu(x),$$
and extended to $H$ by continuity. For details, see \cite[Theorem 3.3]{Put}. The state $\tau_H$ is the unique tracial state on $H$, see \cite{Hou}.

The traces $\tau_S$ and $\tau_U$ will be constructed as in \cite{tracekiller}. By \cite[Theorem 1.1]{tracekiller}, there are for any $x\in X$ measures $\mu_s^x$ defined on $X^s(x)$ and $\mu_u^x$ defined on $X^u(x)$ such that for $0<\epsilon<\epsilon_X$,
\begin{equation}
\label{productform}
\mu([B,C])=\mu_s^x(B)\mu_u^x(C),
\end{equation}
for all Borel sets $B\subseteq X^s(x,\epsilon)$ and $C\subseteq X^u(x,\epsilon)$. For two finite $\varphi$-invariant sets of periodic points $P,Q\subseteq X$, we define $\mu_s^Q:=\sum_{q\in Q}\mu_s^q$ and $\mu_u^P:=\sum_{p\in P}\mu_u^p$. They are measures on $X^s(Q)$ and $X^u(P)$, respectively. The measures $\mu_s^Q$ and $\mu_u^P$ will by \cite[Theorem 1.1]{tracekiller} satisfy the transformation rules 
$$\varphi^* \mu_s^Q=\lambda^{-1}\mu_s^Q\quad\mbox{and}\quad \varphi^* \mu_u^P=\lambda\mu_u^P,$$
where $\log(\lambda)$ is the topological entropy of $(X,\varphi)$. The traces $\tau_S$ and $\tau_U$ are defined as follows. For $f\in C_c(G_s(P))$ and $f'\in C_c(G_u(Q))$ we define 
$$\tau_S(f):=\int_{X^u(P)} f(x,x)\ \mathrm{d}\mu_u^P(x) \quad \mbox{and}\quad \tau_U(f'):=\int_{X^s(Q)} f'(x,x)\ \mathrm{d}\mu_s^Q(x).$$
For details, see \cite[Theorem 1.2]{tracekiller}.

\subsection{Morita equivalences and projections}
\label{moritasubssec}

By \cite[Theorem 3.1]{Put}, $S\otimes U$ is Morita equivalent to $H$. Later we will need to relate the tracial state $\tau_H$ to $\tau_S\otimes \tau_U$. In order to do so, the Morita equivalence $S\otimes U\sim_M H$ needs to be specified. Following \cite[Theorem 3.1]{Put}, we define the space
$$Z(P,Q):=\{(x,x',y)\in X^u(P)\times X^s(Q)\times X: x\sim_s y\sim_u x'\}.$$
The space $Z(P,Q)$ carries a left action of $G_s(P)\times G_u(Q)$ and a right action of $G_h$ defined from 
$$((x_1,x_2),(x_1',x_2')).(x_2,x_2',y_1).(y_1,y_2)=(x_1,x_1',y_2),$$
for $((x_1,x_2),(x_1',x_2'))\in G_s(P)\times G_u(Q)$, $(x_2,x_2',y_1)\in Z(P,Q)$ and $(y_1,y_2)\in G_h$. We topologize $Z(P,Q)$ so that the actions are proper and the mappings 
\begin{align*}
G_s(P)\times G_u(Q)\backslash &Z(P,Q)\to X,\quad (x,x', y)\mapsto y,\quad\mbox{and}\\
&Z(P,Q)/G_s\to X^u(P)\times X^s(Q),\quad (x,x',y)\mapsto (x,x'),
\end{align*}
are homeomorphisms. The space $Z(P,Q)$ is a groupoid Morita equivalence and $C_c(Z(P,Q))$ can be completed into an imprimitivity module ${}_{S\otimes U}C^*(Z(P,Q))_H$ by \cite[Theorem 2.8]{MRW}. 

Let us describe the projection $p\in S\otimes U$ corresponding to $1\in H$ under Morita equivalence. Following \cite[Section 5]{KPWduality}, we take $\epsilon\in (0,\epsilon_X'/2]$ (see \cite[Lemma 2.3]{KPWduality} for the definition of $\epsilon_X'$) and consider an $\epsilon$-partition $(\mathcal{F},\mathcal{G})$ of $X$ (see \cite[Definition 5.1]{KPWduality}). Here $\mathcal{G}=\{g_1,\ldots, g_K\}\subseteq X^h(P,Q)$ is a set of distinct elements and $\mathcal{F}=\{f_1,\ldots, f_K\}\subseteq C(X)$ is a partition of unity such that for any $j$, $f_j$ is supported in $B(g_k,\epsilon/2)$. For $i,j=1,\ldots, K$, we define open sets $V_{ij}\subseteq G^s(P)\times G^u(Q)$ as the set of $((x_1,x_2),(x_1',x_2'))$ from the set
$$X^u(g_i,\epsilon)\times X^u(g_j,\epsilon)\times X^s(g_i,\epsilon)\times X^s(g_j,\epsilon)$$
such that $[x_1,x_1']=[x_2,x_2']$. By \cite[Section 5]{KPWduality}, the collection of sets $(V_{ij})_{i,j=1}^K$ is pairwise disjoint. Moreover, \cite[Lemma 5.3]{KPWduality} defines the projection $p\in S\otimes U$ by 
$$p((x_1,x_2),(x_1',x_2')):=\sum_{i,j=1}^K \chi_{V_{ij}}((x_1,x_2),(x_1',x_2'))f_i([x_1,x_1'])f_j([x_2,x_2']),$$
for $(x_1,x_2)\in G^s(P)$ and $(x_1',x_2')\in G^u(Q)$.

\section{Projections in the totally disconnected stable sets case} \label{specialCaseSec}

\begin{lemma}
If $(X,\varphi)$ is a mixing Smale space with totally disconnected stable sets, then its unstable algebra, $U$, contains a nonzero projection. Moreover, if $p\neq 0$ is a projection in $U$, then $pUp$ is an approximately subhomogeneous $C^*$-algebra.
\end{lemma}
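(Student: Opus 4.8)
The plan is to prove the two assertions separately. For the existence of a nonzero projection in $U$, I would note that the unstable groupoid $G_u(X,\varphi;P)$ is \'etale with unit space $X^s(P)=\bigcup_{p\in P}X^s(p)$, and that by hypothesis each stable set $X^s(p)$ is totally disconnected in its intrinsic topology. Since $X^s(P)$ is a disjoint union of finitely many such stable sets, it is a nonempty, locally compact, Hausdorff, totally disconnected space, and so it contains a nonempty compact open subset $K$, for instance a local stable set $X^s(p,\epsilon)$ with $\epsilon$ small. Then $\chi_K\in C_c(G_u(X,\varphi;P))$, viewed as a function supported on the unit space, satisfies $\chi_K^*=\chi_K=\chi_K*\chi_K$ and is nonzero; thus it is a nonzero projection in $U=C^*(G_u(X,\varphi;P))$. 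This is just the standard fact that a compact open subset of the unit space of an \'etale groupoid yields a projection in the groupoid $C^*$-algebra.

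For the second assertion, the input I would use is that, for a mixing Smale space with totally disconnected stable sets, Wieler's realization of $X$ as a stationary inverse limit \cite{Wielerpaper} together with the $C^*$-algebraic analysis of \cite{DY} presents $U$ as a direct limit $U=\varinjlim_k B_k$ of subhomogeneous $C^*$-algebras; replacing each $B_k$ by the image of the canonical map $B_k\to U$ (a quotient of a subhomogeneous algebra is subhomogeneous), I may assume $B_1\subseteq B_2\subseteq\cdots\subseteq U$. (Alternatively, having produced a nonzero projection in $U$ in the first part, one may invoke \cite[Theorem 4.8]{DS} that $U$ is stably isomorphic to a simple unital approximately subhomogeneous algebra, and run the same argument on $U\otimes\mathbb{K}=\varinjlim M_n(B_k)$.) Now let $p\neq 0$ be a projection in $U$. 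By the functional-calculus-and-approximation argument used for Lemma \ref{directlimitsandproj} (cf. \cite[Proposition 4.5.1]{BlaKth}), there is an index $k$ and a projection $p_k\in B_k$ that is unitarily equivalent to $p$ in the unitization $\widetilde U$, say $p=up_ku^*$. Since $U$ is an ideal in $\widetilde U$ we have $u^*Uu=U$, so conjugation by $u$ is an isomorphism $p_kUp_k\cong pUp$. Finally, $p_k\in B_k\subseteq B_j$ for every $j\geq k$, so
\[
p_kUp_k=\varinjlim_{j\geq k}p_kB_jp_k,
\]
and, identifying $B_j$ with a $C^*$-subalgebra of $M_{l_j}(C_0(Y_j))$, the corner $p_kB_jp_k$ is again a $C^*$-subalgebra of $M_{l_j}(C_0(Y_j))$, hence subhomogeneous. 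Therefore $pUp$ is approximately subhomogeneous.

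I expect the soft steps here --- the groupoid computation, the approximation of $p$ by a projection in a building block, and the remark that a corner of a subhomogeneous algebra is subhomogeneous --- to be routine. The main obstacle is the precise inductive-limit (equivalently, the ``nice Fell algebra'') description of $U$ for Wieler solenoids drawn from \cite{Wielerpaper} and \cite{DY}: pinning down this description, and in particular checking that the building blocks are genuinely subhomogeneous (bounded topological dimension and multiplicity) and that the connecting maps are, or can be arranged to be, injective, is where essentially all of the work lies.
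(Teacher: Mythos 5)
The first assertion is handled exactly as in the paper: the unit space $X^s(P)$ of the unstable groupoid is locally compact, Hausdorff and totally disconnected, so a nonempty compact open subset yields a nonzero projection $\chi_K\in C_c(G_u(X,\varphi;P))\subseteq U$. No issues there.

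For the second assertion, your primary route has a genuine gap. You take as input that Wieler's inverse-limit realization together with \cite{DY} ``presents $U$ as a direct limit of subhomogeneous $C^*$-algebras.'' No such presentation is available from those references: \cite{DY} analyzes the \emph{stable} algebra of a Wieler solenoid, not the unstable one, and even there the building blocks are Fell algebras (the prototype being $C(Y)\otimes\mathbb{K}$, as in the examples of Section \ref{ExSec}), which are not subhomogeneous. The paper is explicit that this direct, inductive-limit approach was attempted and abandoned precisely because the structure of these building blocks is not well enough understood (see the discussion of the Farrell--Jones examples, Example \ref{FarJonExample}). So the step you yourself flag as ``where essentially all of the work lies'' is not a technical verification to be filled in; it is an unproved (and, as stated, false for the natural candidates) structural claim, and the argument cannot be completed along these lines.

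Your parenthetical fallback, however, is essentially the paper's proof and does work. The paper argues that $pUp$ is separable, simple, unital, nuclear with finite nuclear dimension, and satisfies the UCT; hence it is classifiable by \cite{EllGonLinNiu:ClaFinDecRan} and \cite{TWW}, and by \cite{Ell:invariant} its Elliott invariant is realized by an approximately subhomogeneous model, so $pUp$ is itself approximately subhomogeneous. This is exactly the content of the proof of \cite[Theorem 4.8]{DS}, which becomes applicable to $U$ once part one supplies the nonzero projection, so there is no circularity. Your additional corner-transfer step (approximating $p$ by a projection in a building block of $U\otimes\mathbb{K}\cong A\otimes\mathbb{K}$ with $A$ unital ASH, conjugating by a unitary in the unitization, and noting that corners and $C^*$-subalgebras of subhomogeneous algebras are subhomogeneous) is correct but unnecessary: the classification argument can be, and in the paper is, applied directly to $pUp$.
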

\begin{proof}
The unit space of the unstable groupoid is $X^s(P)$, which is locally compact and totally disconnected by assumption. Hence it contains a nonempty compact open subset and the characteristic function associated to any nonempty compact open subset is a nonzero projection in $U$. 

The second part follows from the proof of \cite[Theorem 4.8]{DS} but we give the details. The $C^*$-algebra $pUp$ is separable, simple, unital, has finite nuclear dimension and satisfies the UCT. Hence by \cite[Theorem 4.3]{EllGonLinNiu:ClaFinDecRan} and \cite[Theorem A]{TWW}, $pUp$ fits within the Elliott classification program. It follows from \cite{Ell:invariant} that $pUp$ is an approximately subhomogeneous $C^*$-algebra.
\end{proof}

\begin{theorem} 
\label{totDisConCase}
If $(X,\varphi)$ is a mixing Smale space with totally disconnected stable sets, then its stable algebra contains a nonzero projection.
\end{theorem}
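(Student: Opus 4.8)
The plan is to reduce the statement to Theorem \ref{tensorSubHom} by combining two facts that are, by this point, available to us: that $S\otimes U$ always contains a nonzero projection (recorded in Subsection \ref{moritasubssec}), and that a unital corner of $U$ is approximately subhomogeneous (the preceding lemma). First I would note that $S\otimes U$ admits a nonzero projection. This is immediate from the Morita equivalence $S\otimes U\sim_M H$ of \cite[Theorem 3.1]{Put}: $H$ is unital, hence admits a nonzero projection, and admitting a nonzero projection is preserved under Morita equivalence. (Equivalently one could invoke the explicit projection $p\in S\otimes U$ constructed in Subsection \ref{moritasubssec}.)

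Next, using the preceding lemma, choose a nonzero projection $q\in U$ such that $qUq$ is approximately subhomogeneous. Since $(X,\varphi)$ is mixing, $U$ is simple, so $\overline{UqU}$ is a nonzero closed two-sided ideal in $U$ and therefore equals $U$; by Lemma \ref{stablyuni}, $qUq$ is stably isomorphic to $U$. Tensoring this stable isomorphism with $S$ shows that $S\otimes qUq$ and $S\otimes U$ are stably isomorphic, and since admitting a nonzero projection is a stable-isomorphism invariant, $S\otimes qUq$ admits a nonzero projection.

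Now apply Theorem \ref{tensorSubHom} with $A=S$ and $B=qUq$: as $qUq$ is approximately subhomogeneous and $S\otimes qUq$ admits a nonzero projection, $S$ admits a nonzero projection. Finally, $S$ is stable, so a nonzero projection in some $M_N(S)\cong S\otimes M_N\subseteq S\otimes\mathbb{K}\cong S$ is already a nonzero projection in $S$ itself. I do not expect a genuine obstacle in this argument: the real work has already been imported, namely the existence of a projection in $S\otimes U$ (duality and Morita theory, Subsection \ref{moritasubssec}) and the identification of $qUq$ as approximately subhomogeneous (the preceding lemma, which rests on the finite nuclear dimension results of \cite{DS} and the Elliott classification machinery). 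The only points needing care are the simplicity step $\overline{UqU}=U$, which is valid precisely because we are in the mixing case, and the bookkeeping between ``admits'' and ``contains'' a nonzero projection, which is harmless since $S$ is stable.
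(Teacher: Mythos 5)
Your proposal is correct and follows essentially the same route as the paper's proof: both hinge on the Morita equivalence $S\otimes U\sim_M H$ with $H$ unital, the approximate subhomogeneity of a corner $qUq$ from the preceding lemma, Theorem \ref{tensorSubHom}, and the stability of $S$. The only (cosmetic) difference is that the paper passes directly to the isomorphism $S\otimes qUq\cong H\otimes\mathbb{K}$, whereas you first place a projection in $S\otimes U$ and then transfer it via the stable isomorphism $S\otimes qUq\sim S\otimes U$.
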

\begin{proof}
Let $p$ be a nonzero projection in $U$. Then $U$ is Morita equivalent to $pUp$. By \cite[Theorem 3.1]{Put}, $S\otimes U$ is Morita equivalent to $H$. Using the fact that $S$ is $C^*$-stable, $S\otimes pUp \cong H \otimes \mathbb{K}$. Since $H$ is unital it follows that $S\otimes pUp$ contains a nonzero projection. Using the previous lemma, we can now apply Theorem \ref{tensorSubHom}, which implies that $S\otimes \mathbb{K}$ contains a nonzero projection. The result follows since $S$ is $C^*$-stable.
\end{proof}

\section{Projections in Smale space $C^*$-algebras}
\label{projinsmale}

\begin{define} (Definition 2.6.2 in \cite{PutHom}) \\
An s/u-bijective pair for a Smale space $(X, \varphi)$ is $(Y, \psi, \pi_s, Z, \zeta, \pi_u)$ where
\begin{enumerate}
\item $(Y, \psi)$ and $(Z, \zeta)$ are Smale spaces such that the stable sets of $Z$ are totally disconnected and the unstable sets of $Y$ are totally disconnected.
\item $\pi_s : (Y, \psi) \rightarrow (X, \varphi)$ and $\pi_u: (Z, \zeta) \rightarrow (X, \varphi)$ are factor maps with $\pi_s$ s-bijective and $\pi_u$ u-bijective.
\end{enumerate}
\end{define}

Building on \cite[Theorem 2.6.3]{PutHom}, Amini, Putnam and Saeidi  proved the following:

\begin{theorem} 
\label{suBijectivePair} (see Theorem 2.6 and Proposition 4.8 in \cite{AmPuSa}) \\
Suppose $(X, \varphi)$ is a mixing Smale space. Then, there exists an s/u-bijective pair $(Y, \psi, \pi_s, Z, \zeta, \pi_u)$ for $(X, \varphi)$ such that $(Y, \psi)$ and $(Z, \zeta)$ are mixing.
\end{theorem}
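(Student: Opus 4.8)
The plan is to invoke the structural results of Putnam and of Amini--Putnam--Saeidi rather than to reprove anything from scratch. The statement we must establish (Theorem \ref{suBijectivePair}) asserts the existence of a \emph{mixing} s/u-bijective pair for an arbitrary mixing Smale space $(X,\varphi)$; the content of the theorem is entirely contained in \cite[Theorem 2.6.3]{PutHom} together with the refinement in \cite{AmPuSa}, so the ``proof'' is really a matter of assembling these inputs and recording why the mixing hypothesis transfers to the pair. First I would recall Putnam's construction: starting from $(X,\varphi)$, one builds $(Y,\psi)$ with totally disconnected unstable sets together with an s-bijective factor map $\pi_s$, and dually $(Z,\zeta)$ with totally disconnected stable sets together with a u-bijective factor map $\pi_u$; this is \cite[Theorem 2.6.3]{PutHom}. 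So the only thing left is to arrange that $(Y,\psi)$ and $(Z,\zeta)$ can be taken mixing.

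The key step, and the one I expect to be the main obstacle, is precisely the passage to the mixing case: Putnam's general construction only guarantees that the pair is a Smale space, not that it is mixing (indeed, a priori $(Y,\psi)$ could be nonwandering with a nontrivial Smale decomposition into irreducible pieces). This is exactly the gap filled by Amini--Putnam--Saeidi. I would therefore cite \cite[Theorem 2.6 and Proposition 4.8]{AmPuSa}: their Theorem 2.6 produces an s/u-bijective pair in which, after restricting to an appropriate irreducible component and using that a factor map from a mixing system forces the relevant piece to carry the dynamics surjectively, one can replace $(Y,\psi)$ and $(Z,\zeta)$ by mixing Smale spaces while retaining the s-bijectivity (resp. u-bijectivity) of $\pi_s$ (resp. $\pi_u$). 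The heuristic behind this is that if $\pi_s\colon Y\to X$ is s-bijective and $X$ is mixing, then the ``spectral decomposition'' of $Y$ must contain a component that maps onto $X$, and — because s-bijectivity is inherited by restriction to such a component together with the fact that $X$ is a single mixing piece — that component is itself mixing; the analogous argument handles $(Z,\zeta)$.

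Concretely, the write-up would be short: (i) apply \cite[Theorem 2.6.3]{PutHom} to obtain \emph{some} s/u-bijective pair $(Y,\psi,\pi_s,Z,\zeta,\pi_u)$ for $(X,\varphi)$; (ii) invoke \cite[Theorem 2.6]{AmPuSa} to upgrade the non-totally-disconnected factor to a mixing one and \cite[Proposition 4.8]{AmPuSa} to do the symmetric upgrade, checking in each case that the defining properties of an s/u-bijective pair (the totally disconnectedness of the relevant local sets and the s/u-bijectivity of the factor maps) are preserved under the restriction; (iii) conclude that the resulting sextuple is the desired mixing s/u-bijective pair. I would be careful to note that no originality is claimed here — the theorem is stated merely because it is the precise input needed in the next section, where it feeds into the s/u-bijective-pair machinery that reduces the existence-of-projections problem for $S$ to the totally disconnected stable sets case already handled in Theorem \ref{totDisConCase}.
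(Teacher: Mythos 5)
Your proposal matches the paper exactly: the theorem is stated as a citation of \cite[Theorem 2.6.3]{PutHom} together with \cite[Theorem 2.6 and Proposition 4.8]{AmPuSa}, and the paper offers no independent proof, just as you propose to assemble these external inputs. Your additional heuristic about why mixing transfers to the pair is reasonable motivation but is not something the paper attempts to verify either.
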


\begin{theorem} (special case of the u-bijective version of \cite[Corollary 3.6]{PutFunProp}) \label{induceMapThm} \\
Suppose that $\pi: (Z, \zeta) \rightarrow (X, \varphi)$ is a u-bijective map between two mixing Smale space and $P$ is a finite $\zeta$-invariant subset of $Z$. Then $G_s(Z, \zeta;P)$ is an open subgroupoid of $G_s(X, \varphi;\pi(P))$ and hence there is an induced nonzero $*$-homomorphism 
\[ \pi_* : C^*(G_s(Z, \zeta;P)) \rightarrow C^*(G_s(X, \varphi;\pi(P))). \]
\end{theorem}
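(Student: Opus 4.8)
The plan is to unwind the definitions of the stable groupoids on both sides and exhibit $G_s(Z,\zeta;P)$ as a topological subgroupoid of $G_s(X,\varphi;\pi(P))$, then invoke the standard functoriality of the groupoid $C^*$-algebra construction for open inclusions. First I would note that since $\pi$ is $u$-bijective, for each $z\in Z$ the induced map $Z^u(z)\to X^u(\pi(z))$ is a bijection; moreover $\pi$ intertwines the dynamics, so $\pi(Z^s(z))\subseteq X^s(\pi(z))$ and $\pi$ carries the $\zeta$-invariant set $P$ of periodic points to the $\varphi$-invariant set $\pi(P)$ of periodic points of the same periods. In particular $\pi$ maps $Z^s(P)$ into $X^s(\pi(P))$ and — this is the crucial point — maps $Z^u(P)$ \emph{bijectively} onto $X^u(\pi(P))$, because $u$-bijectivity gives injectivity on each unstable set and the invariance of $P$ plus surjectivity of $\pi$ gives surjectivity onto $X^u(\pi(P))$. (One should check injectivity across distinct unstable classes as well; two points of $Z^u(P)$ with the same image lie over the same point of $X$, hence in the same unstable class of $X$, hence by $u$-bijectivity in the same unstable class of $Z$, so injectivity on a single unstable set suffices.) Thus the map $(z,z')\mapsto(\pi(z),\pi(z'))$ restricts to an injection $G_s(Z,\zeta;P)\hookrightarrow G_s(X,\varphi;\pi(P))$ on the level of sets, and it is clearly a groupoid homomorphism.

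Next I would verify that this injection is a homeomorphism onto an open subgroupoid. The unit space of $G_s(Z,\zeta;P)$ is $Z^u(P)$, which maps homeomorphically onto its image in $X^u(\pi(P))$ — here I would use that $\pi$ restricted to a local unstable set $Z^u(z,\epsilon)$ is a homeomorphism onto $X^u(\pi(z),\epsilon)$ (a consequence of $u$-resolving/$u$-bijective together with the Smale space bracket axioms; $\pi$ is automatically a local homeomorphism on unstable sets in the étale topology), and that these local unstable sets form the bases of the respective topologies. The image $Z^u(P)\cong \pi(Z^u(P))$ is open in $X^u(\pi(P))$ because $X^u(\pi(P))=\pi(Z^u(P))\cup X^u(\pi(P)\setminus\pi(\text{relevant part}))$; more carefully, openness should follow from the fact that $\pi$ is a local homeomorphism between the étale unstable topologies, so it is an open map, and $Z^u(P)$ is open in itself. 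Then on the level of arrows, the étale topology on $G_s(Z,\zeta;P)$ has a basis of bisections coming from local stable/unstable set data, and $\pi\times\pi$ carries such a basic bisection to an open bisection of $G_s(X,\varphi;\pi(P))$; this is essentially the content cited from \cite[Corollary 3.6]{PutFunProp} and \cite{PutFunProp, PutSpi}. I would cite these rather than reprove the topology match.

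Finally, once $G_s(Z,\zeta;P)$ is identified with an open subgroupoid of $G_s(X,\varphi;\pi(P))$, extension by zero gives an injective $*$-homomorphism $C_c(G_s(Z,\zeta;P))\to C_c(G_s(X,\varphi;\pi(P)))$, and the standard fact that for an open subgroupoid $H\subseteq G$ (with $G$ étale, here both amenable) this extends to a $*$-homomorphism $\pi_*:C^*(H)\to C^*(G)$ — see \cite{PutSpi, PutFunProp} — yields the desired map. Nonzero-ness is immediate: the image of any nonzero $f\in C_c(G_s(Z,\zeta;P))$ is a nonzero element of $C_c(G_s(X,\varphi;\pi(P)))$, and the reduced norm is faithful on $C_c$.

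The main obstacle I anticipate is not the algebra but the topology: carefully checking that $\pi\times\pi$ is a homeomorphism onto an \emph{open} subset, i.e.\ that the étale topologies are compatible and that the image is open. This requires knowing that a $u$-bijective factor map is a local homeomorphism for the locally compact étale topologies on unstable sets, which is where the hypothesis that both systems are mixing (so that $u$-bijective $=$ $u$-resolving, as noted after Definition \ref{MixDef}) and the detailed construction in \cite{PutSpi} are used. Since the statement is explicitly flagged as a special case of \cite[Corollary 3.6]{PutFunProp}, I would lean on that reference for the topological verification and focus the written proof on the identification of the sets $Z^u(P)\to X^u(\pi(P))$ being bijective and on the conclusion that the induced $*$-homomorphism is nonzero.
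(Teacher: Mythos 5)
The paper gives no proof of this statement at all: it is quoted verbatim as a special case of \cite[Corollary 3.6]{PutFunProp}, so your attempt to reconstruct the argument is necessarily going beyond the text. Your overall outline --- exhibit $\pi\times\pi$ as a homeomorphism of $G_s(Z,\zeta;P)$ onto an open subgroupoid, then extend by zero to get a $*$-homomorphism of the (amenable, \'etale) groupoid $C^*$-algebras, with nonvanishing coming from faithfulness on $C_c$ --- is indeed the content of the cited result, and the topological points you defer to \cite{PutFunProp} (that a $u$-resolving map is a homeomorphism for the intrinsic topologies on unstable sets, and that the image of a basic bisection is open) are the right things to defer.

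There is, however, a genuine gap in the one step you do argue in detail: the claimed injectivity of $\pi$ on $Z^u(P)$. Your reasoning is that two points of $Z^u(P)$ with the same image lie in the same unstable class of $X$ and ``hence by $u$-bijectivity in the same unstable class of $Z$.'' That last implication is false: $u$-bijectivity says each class $Z^u(z)$ maps bijectively onto $X^u(\pi(z))$; it does not say that $\pi^{-1}\bigl(X^u(x)\bigr)$ is a single unstable class. Concretely, let $X$ be the full $2$-shift, $Z=X\times \Z/2$ with $\zeta(x,g)=(\sigma x, g+x_0)$, and $\pi$ the first-coordinate projection. Then $Z$ is a mixing SFT and $\pi$ is a $2$-to-$1$ factor map that is both $s$- and $u$-bijective. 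Writing $x^*$ for the all-zeros fixed point, $P=\{(x^*,0),(x^*,1)\}$ is a finite $\zeta$-invariant set, the two points lie in distinct unstable classes, and both classes map bijectively onto $X^u(x^*)$; so $\pi|_{Z^u(P)}$ is exactly $2$-to-$1$ and $\pi\times\pi$ is a $2$-fold covering of groupoids rather than an inclusion (and summing over fibers does not yield a $*$-homomorphism). So injectivity is not automatic from $u$-bijectivity alone; it depends on the choice of $P$ (it holds, for instance, whenever the unstable classes $Z^u(p)$, $p\in P$, have pairwise distinct images, which is how the hypotheses of \cite[Corollary 3.6]{PutFunProp} should be read and how the theorem is applied in the proof of Theorem \ref{mainResult}). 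Your proof should either impose and verify this condition on $P$ or address how the map is to be interpreted when it fails; as written, the bijectivity claim --- which you flag as ``the crucial point'' --- does not follow from the stated hypotheses.
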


We can prove the existence of nonzero projections in full generality. 

\begin{theorem} 
\label{mainResult}
If $(X, \varphi)$ is a mixing Smale space, then its stable and unstable $C^*$-algebras have nonzero projections. In particular, these projections are full. 
\end{theorem}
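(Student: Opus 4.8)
The plan is to deduce the general case from the totally disconnected case (Theorem \ref{totDisConCase}) by transporting projections along the functorial maps attached to an s/u-bijective pair. First I would invoke Theorem \ref{suBijectivePair} to fix an s/u-bijective pair $(Y,\psi,\pi_s,Z,\zeta,\pi_u)$ for $(X,\varphi)$ in which both $(Y,\psi)$ and $(Z,\zeta)$ are mixing. By the definition of an s/u-bijective pair the stable sets of $(Z,\zeta)$ are totally disconnected, so $(Z,\zeta)$ is a mixing Smale space of exactly the type handled by Theorem \ref{totDisConCase}.

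Next I would choose a finite $\zeta$-invariant set $P$ of periodic points of $Z$ (periodic points are dense, so such a $P$ exists). Since $\pi_u$ intertwines $\zeta$ and $\varphi$, the image $\pi_u(P)$ is a finite $\varphi$-invariant set of periodic points of $X$, and $C^*(G_s(X,\varphi;\pi_u(P)))$ is a copy of the stable algebra $S$ (all choices of periodic-point set yield isomorphic stable algebras, since these are $C^*$-stable). By Theorem \ref{totDisConCase} there is a nonzero projection $q\in C^*(G_s(Z,\zeta;P))$, and by Theorem \ref{induceMapThm} the inclusion of $G_s(Z,\zeta;P)$ as an open subgroupoid of $G_s(X,\varphi;\pi_u(P))$ induces a nonzero $*$-homomorphism
\[
(\pi_u)_*\colon C^*(G_s(Z,\zeta;P)) \longrightarrow C^*(G_s(X,\varphi;\pi_u(P))).
\]
Here I would use that $(Z,\zeta)$ is mixing, so $C^*(G_s(Z,\zeta;P))$ is simple; hence the nonzero homomorphism $(\pi_u)_*$ has trivial kernel and is therefore injective (indeed isometric), and $(\pi_u)_*(q)$ is a nonzero projection in $C^*(G_s(X,\varphi;\pi_u(P)))\cong S$. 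This settles the stable case for an arbitrary mixing Smale space.

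For the unstable algebra I would simply apply the statement just proved to $(X,\varphi^{-1})$, which is again a mixing Smale space: by definition $U(X,\varphi)=C^*(G_u(X,\varphi;P))=C^*(G_s(X,\varphi^{-1};P))=S(X,\varphi^{-1})$, so the conclusion for stable algebras of mixing Smale spaces yields a nonzero projection in $U$. Finally, fullness is immediate: since $(X,\varphi)$ is mixing, $S$ and $U$ are simple, and every nonzero projection in a simple $C^*$-algebra is full.

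Essentially all the difficulty sits in the two inputs being invoked, namely Theorem \ref{suBijectivePair} (which supplies an s/u-bijective pair with the \emph{mixing} refinement) and Theorem \ref{totDisConCase} (whose proof is the indirect tensor-product argument). Within the present proof the only point that needs attention is that Theorem \ref{induceMapThm} produces a map which is a priori only asserted to be nonzero; the observation that simplicity of the source algebra promotes ``nonzero'' to ``injective'', so that the projection $q$ is not annihilated, is what makes the argument close.
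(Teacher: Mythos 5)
Your proposal is correct and follows essentially the same route as the paper's own proof: fix an s/u-bijective pair with $(Z,\zeta)$ mixing via Theorem \ref{suBijectivePair}, obtain a nonzero projection in $C^*(G_s(Z,\zeta;P))$ from Theorem \ref{totDisConCase}, push it forward along the injective (because the source is simple and the map nonzero) homomorphism of Theorem \ref{induceMapThm}, and handle $U$ by passing to $(X,\varphi^{-1})$. The fullness observation via simplicity also matches the paper's opening remark in its proof.
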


\begin{proof}
We initiate the proof by remarking that the involved $C^*$-algebras are simple and stable, so they contain a full projection if and only if they admit a nonzero projection. Since the unstable algebra of $(X, \varphi)$ is the stable algebra of $(X, \varphi^{-1})$ we need only prove that the stable algebra contains a nonzero projection. Fix an s/u-bijective pair $(Y, \psi, \pi_s, Z, \zeta, \pi_u)$ for $(X, \varphi)$ as in Theorem \ref{suBijectivePair}. In particular, $\pi_u: (Z, \zeta) \rightarrow (X, \varphi)$ is u-bijective, the stable sets of $(Z, \zeta)$ are totally disconnected and $(Z, \zeta)$ is mixing. 

Fix a finite set of $\zeta$-invariant points, $P$, in $Z$ and form $C^*(G_s(Z, \zeta; P))$. By Theorem \ref{totDisConCase}, $C^*(G_s(Z, \zeta;P))$ has a nonzero projection. It is also a simple $C^*$-algebra. By Theorem \ref{induceMapThm}, there is nonzero $*$-homomorphism
\[
(\pi_u)_* : C^*(G_s(Z, \zeta; P)) \rightarrow C^*(G_s(X, \varphi; \pi(P)))
\]
Since $C^*(G_s(Z, \zeta; P))$ is simple and this map is nonzero, it is injective. Hence, $C^*(G_s(X, \varphi; \pi(P)))$ has a nonzero projection because $C^*(G_s(Z, \zeta; P))$ does. The result now follows, since changing the finite set of invariant points in the definition of the stable algebra does not affect the isomorphism class of the $C^*$-algebra by \cite[Appendix]{DY}.
\end{proof}

The results in \cite{DS} show that the stable and unstable algebra of a Smale space have finite nuclear dimension (and hence are quasi-diagonal). In addition, assuming existence of nonzero projections, they are proven to be approximately subhomogeneous.  From these results in \cite{DS}, Theorem \ref{mainResult} and Lemma \ref{stablyuni} we deduce the next corollary. 
We are additionally using the fact that the $C^*$-algebras associated with a mixing Smale space are simple (see \cite[Theorem 1.3]{PutSpi}) and stable (see \cite{DY}).

\begin{cor}
\label{corwithevery}
If $(X, \varphi)$ is a mixing Smale space, there are simple unital $C^*$-algebras $A_s(X,\phi,P)$ and $A_u(X,\phi,Q)$ and isomorphisms
$$C^*(G_s(X,\phi,P))\cong A_s(X,\phi,P)\otimes \mathbb{K}\quad\mbox{and}\quad C^*(G_u(X,\phi,Q))\cong A_u(X,\phi,Q)\otimes \mathbb{K}.$$
The unital $C^*$-algebras $A_s(X,\phi,P)$ and $A_u(X,\phi,Q)$ are approximately subhomogeneous, satisfy the UCT, have finite nuclear dimension (and are quasi-diagonal).
\end{cor}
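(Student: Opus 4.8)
The plan is to obtain $A_s$ and $A_u$ as unital hereditary corners of the stable and unstable algebras cut down by full projections, and then to transport the structural properties along the resulting Morita equivalences, using the conditional results of \cite{DS} whose hypothesis is now supplied by Theorem \ref{mainResult}.

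First, since the unstable algebra of $(X,\varphi)$ is the stable algebra of $(X,\varphi^{-1})$, it suffices to treat the stable case. Write $S:=C^*(G_s(X,\phi,P))$. By Theorem \ref{mainResult} there is a nonzero projection $p\in S$, and $p$ is full, i.e.\ $\overline{SpS}=S$, because $S$ is simple. Set $A_s(X,\phi,P):=pSp$; this is a unital $C^*$-algebra with unit $p$, and it is simple, being the corner of a simple $C^*$-algebra by a full projection. By the first part of Lemma \ref{stablyuni}, $A_s=pSp$ is stably isomorphic to $\overline{SpS}=S$. Since $S$ is $C^*$-stable by \cite{DY}, we have $S\cong S\otimes\mathbb{K}$, and therefore $A_s\otimes\mathbb{K}\cong S\otimes\mathbb{K}\cong S=C^*(G_s(X,\phi,P))$, which is the asserted isomorphism; the unstable case is identical after replacing $\varphi$ by $\varphi^{-1}$ and choosing a full projection $q\in U$, putting $A_u(X,\phi,Q):=qUq$.

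It remains to verify the structural properties for $A_s$ (the argument for $A_u$ being the same). It is separable and nuclear because $S$ is. It satisfies the UCT since $S$ does (by \cite{DS}) and the bootstrap class is closed under stable isomorphism. It has finite nuclear dimension because $S$ does (again by \cite{DS}) and nuclear dimension is a stable-isomorphism invariant of separable $C^*$-algebras (equivalently $\dim_{\mathrm{nuc}}A_s=\dim_{\mathrm{nuc}}(A_s\otimes\mathbb{K})=\dim_{\mathrm{nuc}}S$). Approximate subhomogeneity then follows from \cite[Theorem 4.8]{DS}, whose hypothesis that the stable and unstable algebras admit nonzero projections is exactly Theorem \ref{mainResult}; alternatively one may re-run the argument of the lemma in Section \ref{specialCaseSec}, since $A_s$ is separable, simple, unital, has finite nuclear dimension and satisfies the UCT, hence fits into the Elliott classification program and is therefore approximately subhomogeneous by \cite{Ell:invariant}. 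Finally, $A_s$ is quasi-diagonal: it is separable, nuclear, unital, simple, stably finite (these algebras carry faithful traces, cf.\ \cite{Put} and Subsection \ref{tracialsubsection}) and satisfies the UCT, so quasi-diagonality follows from \cite[Theorem A]{TWW}, as already noted in \cite{DS}.

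I do not expect a genuine obstacle here: the mathematical content of the corollary is entirely contained in Theorem \ref{mainResult}, Lemma \ref{stablyuni}, and the (conditional) results imported from \cite{DS}. The only points requiring a little care are the bookkeeping ones --- that a corner of a simple $C^*$-algebra by a full projection is again simple, and that the relevant invariants genuinely pass between $S$ and $pSp$. It is in this last point that $C^*$-stability of $S$ from \cite{DY} does essential work, upgrading the bare statement ``stably isomorphic to a unital algebra'' to the clean conclusion $S\cong A_s\otimes\mathbb{K}$.
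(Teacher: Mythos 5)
Your proposal is correct and follows essentially the same route as the paper, which deduces the corollary from Theorem \ref{mainResult}, Lemma \ref{stablyuni}, the simplicity and $C^*$-stability of $S$ and $U$, and the conditional results of \cite{DS}; you have merely written out the bookkeeping (the corner $pSp$, passage of UCT, nuclear dimension, and quasi-diagonality along stable isomorphism) that the paper leaves implicit.
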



\section{Real rank zero}

In this section, we discuss the traces on the stable and unstable algebras and study real rank zero in the context of Smale space $C^*$-algebras. The following facts will be used:
\begin{enumerate}
\item It follows from Perron--Frobenius theory (see in particular \cite[Theorem 4.3.1]{LM}) and Bowen's theorem (see in particular \cite[Theorem 33]{Bow1}) that the topological entropy of a mixing infinite Smale space is strictly greater than zero. Hence if $\log(\lambda)$ is the entropy then $\lambda>1$. 
\item The homoclinic algebra of a mixing Smale space has a unique tracial state $\tau_H$, see \cite{Hou} and Subsection \ref{tracialsubsection} above.
\item In \cite{tracekiller,Put}, lower semi-continuous densely defined traces $\tau_S$ and $\tau_U$ are defined on the stable and unstable algebras. See more in Subsection \ref{tracialsubsection} above.
\item Any lower semi-continuous densely defined trace on a $C^*$-algebra is finite on its Pedersen ideal (i.e., the Pedersen ideal forms a common domain of definition for such traces) see \cite[page 73]{ComZet}.
\item In particular, the previous fact implies that a lower semi-continuous densely defined trace on a unital $C^*$-algebra is a trace (i.e., finite on the entire algebra).
\end{enumerate}

\begin{prop} 
\label{uniqueTrace}
The $C^*$-algebra $S$, $U$ and $S\otimes U$ each have a unique (up to positive scaling) positive lower semi-continuous densely defined trace. Moreover, if $A$ is any of these three algebras and $p \in A$ is a nonzero projection, then the unital algebra $pAp$ has a unique tracial state.
\end{prop}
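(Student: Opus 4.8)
The plan is to prove uniqueness of the trace on $S$ first, then transport the statement to $U$ and $S \otimes U$, and finally deduce the statement about $pAp$. The key input is the transformation rule $\varphi^* \mu_u^P = \lambda \mu_u^P$ with $\lambda > 1$ recorded in Subsection \ref{tracialsubsection}, together with the fact (item (1) in the list above) that mixing forces $\lambda > 1$ strictly. The dynamics $\varphi$ induces an automorphism of $S$ (coming from the homeomorphism of the groupoid $G_s(P)$, after possibly enlarging or translating $P$ so that it is $\varphi$-invariant as a set, which one may always arrange), and under this automorphism $\tau_S$ scales by $\lambda^{-1}$; more precisely $\tau_S \circ \mathrm{Ad}(\varphi) = \lambda \, \tau_S$ or $\lambda^{-1}\tau_S$ depending on convention. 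So $\tau_S$ is, up to scaling, a $\varphi$-eigentrace with eigenvalue $\ne 1$.

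First I would set up the statement that $S$ has a densely defined lower semicontinuous trace that scales under the canonical automorphism, and recall that $H$ has a \emph{unique} tracial state $\tau_H$ (item (2)). The cleanest route to uniqueness on $S \otimes U$ is: any densely defined l.s.c.\ trace on $S \otimes U$ restricts, via the Morita equivalence $S \otimes U \sim_M H$ of Subsection \ref{moritasubssec}, to a densely defined l.s.c.\ trace on $H$; since $H$ is unital, that trace is bounded (item (5)), hence a multiple of $\tau_H$ by \cite{Hou}; and the Morita-equivalence correspondence between traces is a bijection up to scaling (a trace on a $C^*$-algebra and its stabilization, or two Morita equivalent algebras, correspond bijectively — this is the standard fact that the cone of l.s.c.\ densely defined traces is a Morita invariant). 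Therefore $S \otimes U$ has a unique l.s.c.\ densely defined trace up to scaling, namely the one corresponding to $\tau_H$, which one checks is proportional to $\tau_S \otimes \tau_U$. For $S$ alone: a densely defined l.s.c.\ trace $\tau$ on $S$ induces $\tau \otimes \tau_U$ on $S \otimes U$ (using that $U$ has a trace), which by the previous step must be $c(\tau_S \otimes \tau_U)$ for some $c > 0$; slicing off $\tau_U$ then forces $\tau = c\,\tau_S$. Symmetrically for $U$. Alternatively — and this is the more self-contained argument I would likely present — uniqueness on $S$ follows directly from the scaling property: if $\tau$ and $\tau_S$ are two l.s.c.\ densely defined traces on $S$, both are $\varphi$-eigentraces (by naturality of the construction, or because the space of such traces is finite-dimensional and $\varphi$-invariant and $\varphi$ acts by $\lambda^{\pm 1}$), and an ergodicity/unique-ergodicity argument for the Bowen measure pins down the eigentrace for a given eigenvalue up to scale.

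Finally, for the last sentence: given a nonzero projection $p \in A$ (which exists by Theorem \ref{mainResult} and Theorem \ref{totDisConCase}), the algebra $pAp$ is unital, and $\tau \mapsto \tau|_{pAp}$ is a bijection (up to scaling) between l.s.c.\ densely defined traces on $A$ and on $pAp$ — this is again Morita invariance of the trace cone, since $pAp \sim_M \overline{ApA} = A$ by Lemma \ref{stablyuni} using simplicity of $A$. Since $A$ has a unique such trace up to scaling, so does $pAp$; and on the unital algebra $pAp$ that trace is finite (item (5)), so after normalizing $\tau(p) = 1$ we get a unique tracial \emph{state}. One should also note that $\tau_S$ (resp.\ $\tau_U$, resp.\ $\tau_S\otimes\tau_U$) is nonzero on $p$ — this follows because $p \ne 0$ and the trace is faithful on the simple algebra $A$ (a densely defined l.s.c.\ trace on a simple $C^*$-algebra that is nonzero is automatically faithful), so the restriction is genuinely nonzero and can be normalized.

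The main obstacle I anticipate is the bookkeeping around densely defined (unbounded) traces: one must be careful that the Morita-equivalence bijection between trace cones is stated and applied for \emph{lower semicontinuous densely defined} traces (not just bounded ones), that restriction to $pAp$ lands in the Pedersen ideal appropriately (item (4)), and that the imprimitivity bimodule ${}_{S\otimes U}C^*(Z(P,Q))_H$ of Subsection \ref{moritasubssec} actually transports $\tau_H$ to a multiple of $\tau_S \otimes \tau_U$ rather than to some other trace — verifying this last compatibility is essentially the content one needs from \cite{Put} and \cite{tracekiller}, and is where the explicit description of the bimodule and the product formula \eqref{productform} get used. The rest is soft: simplicity gives faithfulness, unitality gives boundedness, and $\lambda \ne 1$ (item (1)) is what rules out extra traces in the first place.
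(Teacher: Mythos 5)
Your first route is exactly the paper's proof: uniqueness for $S\otimes U$ comes from the Morita equivalence $S\otimes U\sim_M H$ together with uniqueness of the tracial state on $H$ and the Combes--Zettl correspondence between l.s.c.\ densely defined traces on Morita equivalent algebras; uniqueness for $S$ (and symmetrically $U$) then follows by tensoring a candidate trace with $\tau_U$ and slicing; and the statement about $pAp$ follows from $pAp\sim_M \overline{ApA}=A$ (simplicity) plus finiteness of a densely defined l.s.c.\ trace on a unital algebra. That part is complete and correct.

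Be warned, however, about the ``alternative'' argument you say you would likely present instead. The claim that any l.s.c.\ densely defined trace on $S$ is automatically a $\varphi$-eigentrace is unjustified: naturality only tells you that $\tau_S$ itself scales under the induced automorphism, not that an arbitrary trace does, and the assertion that the cone of such traces is finite-dimensional is essentially the statement you are trying to prove (uniqueness up to scale is one-dimensionality of that cone). The scaling relation $\varphi^*\mu_u^P=\lambda\mu_u^P$ with $\lambda>1$ is used in the paper only later, in Proposition \ref{denseReal}, to show the range of $K_0(\tau_H)$ is dense --- it is not what drives uniqueness. Stick with your first route.
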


\begin{proof}
We begin with $S\otimes U$. By \cite[Theorem 3.1]{Put}, $S\otimes U$ is Morita equivalent to $H$. Since $H$ (which is unital) has a unique trace, it follows that $S\otimes U$ (which is non-unital) has a unique (up to scaling) lower semi-continuous densely defined trace by \cite[Proposition 2.2 and Corollary 2.4]{ComZet}. Moreover, this trace is given by $\tau_S \otimes \tau_U$.

Next suppose $\tau_1$ and $\tau_2$ are two lower semi-continuous densely defined traces on $S$. Then $\tau_1 \otimes \tau_U$ and $\tau_2 \otimes \tau_U$ are each lower semi-continuous densely defined traces on the Pedersen ideal of $S\otimes U$. It follows that $\tau_1 \otimes \tau_U= c \tau_2 \otimes \tau_U$ for some $c>0$. From which we obtain that $\tau_1= c \tau_2$. The proof for $U$ follows from the result for $S$.

The second part of the theorem follows from the first part of the theorem using \cite[Proposition 2.2 and Corollary 2.4]{ComZet} and the fact that $pAp$ is Morita equivalence to $A$ because $A$ is simple.
\end{proof}

\begin{prop}
The traces $\tau_H$ and $\tau_S\otimes \tau_U$ coincide under the Morita equivalence defined from the imprimitivity bimodule ${}_{S\otimes U}C^*(Z(P,Q))_H$ (see Subsection \ref{moritasubssec}). In particular, the following diagram commutes:
\begin{equation}
\label{commdiagindprob}
\xymatrix{
K_0(H) \ar[rdd]_{{\rm K_0(\tau_H)}}  
 \ar[rr]^{\cong} 
 & & K_0(S\otimes U) \ar[ldd]^{{\rm K_0(\tau_S)}\otimes {\rm K_0(\tau_U)}}
 \\ \\
& \field{R} &
}
\end{equation}
where the horizontal arrow is Morita equivalence.
\end{prop}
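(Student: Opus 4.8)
The plan is to leverage the uniqueness of traces from Proposition \ref{uniqueTrace} so that the whole statement collapses to the computation of a single normalization constant. Write $M=C^*(Z(P,Q))$ for the imprimitivity bimodule ${}_{S\otimes U}M_H$ and let $\Psi\colon K_0(H)\xrightarrow{\ \cong\ }K_0(S\otimes U)$ be the induced isomorphism, i.e.\ the horizontal arrow in \eqref{commdiagindprob}. I would invoke two standard facts about such a bimodule: (i) it induces a bijection between cones of lower semicontinuous densely defined traces (the mechanism already used via \cite[Proposition 2.2 and Corollary 2.4]{ComZet} in the proof of Proposition \ref{uniqueTrace}); and (ii) if $\widehat{\tau}$ on $S\otimes U$ is the trace induced from a trace $\tau$ on $H$, then $K_0(\widehat{\tau})=K_0(\tau)\circ\Psi^{-1}$, which follows from $\widehat{\tau}(M\otimes_H q)=\tau(q)$ on projections $q$ over $H$ together with $\Psi[q]=[M\otimes_H q]$. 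Applying this to $\tau=\tau_H$: the induced trace $\widehat{\tau_H}$ is a lower semicontinuous densely defined trace on $S\otimes U$, so by Proposition \ref{uniqueTrace} there is $c>0$ with $\widehat{\tau_H}=c\,(\tau_S\otimes\tau_U)$. Combined with (ii) this gives $K_0(\tau_H)=c\,(K_0(\tau_S)\otimes K_0(\tau_U))\circ\Psi$, so the diagram \eqref{commdiagindprob} commutes exactly when $c=1$, and that is all that remains to show.

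To pin down $c$ I would evaluate on the explicit projection $p\in S\otimes U$ of Subsection \ref{moritasubssec}, which by \cite[Lemma 5.3]{KPWduality} satisfies $\Psi([1_H])=[p]$. On one side, using (ii), $\widehat{\tau_H}(p)=K_0(\tau_H)(\Psi^{-1}[p])=K_0(\tau_H)([1_H])=\tau_H(1_H)=1$, since $\tau_H$ is a tracial state. On the other side, I claim $(\tau_S\otimes\tau_U)(p)=1$, computed directly from the formulas. Restricting $p$ to the diagonal of $G_s(P)\times G_u(Q)$ (the unit space $X^u(P)\times X^s(Q)$ over which $\tau_S\otimes\tau_U$ integrates) one has $[x_1,x_1']=[x_2,x_2']=[x,x']$; moreover a diagonal element lies in $V_{ij}$ only when $x\in X^u(g_i,\epsilon)\cap X^u(g_j,\epsilon)$ and $x'\in X^s(g_i,\epsilon)\cap X^s(g_j,\epsilon)$, a condition symmetric in $i,j$, so the pairwise disjointness of the $V_{ij}$ forces the $i\neq j$ contributions to vanish and leaves
\[
p\big((x,x),(x',x')\big)=\sum_{i=1}^{K}\chi_{X^u(g_i,\epsilon)}(x)\,\chi_{X^s(g_i,\epsilon)}(x')\,f_i([x,x'])^2 .
\]
Integrating the $i$-th term in $x$ against $\mu_u^P$ over $X^u(g_i,\epsilon)$ and in $x'$ against $\mu_s^Q$ over $X^s(g_i,\epsilon)$, then applying the product formula \eqref{productform} with base point $g_i$ (so that the change of variables $(x,x')\mapsto[x,x']$ is the local product chart at $g_i$), turns it into $\int f_i^2\,\mathrm{d}\mu$ over a rectangle neighbourhood of $g_i$; summing over $i$ and using the defining property of the $\epsilon$-partition $(\mathcal{F},\mathcal{G})$, namely $\sum_i f_i^2=1$ (see \cite[Definition 5.1]{KPWduality}), yields $(\tau_S\otimes\tau_U)(p)=\int_X 1\,\mathrm{d}\mu=\mu(X)=1$ since $\mu$ is a probability measure. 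Hence $1=\widehat{\tau_H}(p)=c\,(\tau_S\otimes\tau_U)(p)=c$, so $c=1$; equivalently $\widehat{\tau_H}=\tau_S\otimes\tau_U$, which is precisely the assertion that $\tau_H$ and $\tau_S\otimes\tau_U$ coincide under the Morita equivalence, and the diagram commutes.

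I expect the main obstacle to be the measure-theoretic bookkeeping in the step $(\tau_S\otimes\tau_U)(p)=1$: one must verify that $\mu_u^P$ restricted to the local unstable set $X^u(g_i,\epsilon)$ is $\mu_u^{g_i}$ and likewise $\mu_s^Q|_{X^s(g_i,\epsilon)}=\mu_s^{g_i}$, so that \eqref{productform} applies with base point $g_i$; this uses that each $g_i$ lies in $X^h(P,Q)$ (so its local stable and unstable sets sit inside the relevant unit spaces) together with the characterization of these measures in \cite[Theorem 1.1]{tracekiller}. A secondary but routine point is making fact (ii) precise for a non-unital imprimitivity bimodule. Everything else is a direct unwinding of the definitions in Subsection \ref{moritasubssec} and the trace formulas in Subsection \ref{tracialsubsection}.
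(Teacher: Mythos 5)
Your proposal is correct and follows essentially the same route as the paper: reduce via the uniqueness of traces (Proposition \ref{uniqueTrace}) to checking commutativity on $[1_H]$, i.e.\ to the single computation $(\tau_S\otimes\tau_U)(p)=1$ for the explicit projection $p$ of Subsection \ref{moritasubssec}, then evaluate the integral using the disjointness of the $V_{ij}$, the product formula \eqref{productform}, and $\sum_i f_i^2=1$. Your symmetry argument for why the $i\neq j$ terms vanish on the unit space is a slightly more explicit justification of what the paper states as ``readily verified,'' but the substance is identical.
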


\begin{proof}
Since $S$, $U$, and $H$ carry unique traces, by Proposition \ref{uniqueTrace}, it suffices to prove that the diagram commutes when applied to $[1]\in K_0(H)$. Indeed, uniqueness of traces implies that for a $\nu>0$ we have that $\tau_H=\nu \tau_S\otimes \tau_U$, so $\tau_H(1)=\nu (\tau_S\otimes \tau_U)(p)$. Here $p\in S\otimes U$ denotes a projection corresponding to $1\in H$ under the Morita equivalence, see Subsection \ref{moritasubssec}. 

We thus need to prove that $\tau_S\otimes \tau_U(p)=1$. For notational convenience, we let $G_s^{(0)}$ and $G_u^{(0)}$ denote the unit spaces $X^u(P)$ and $X^s(Q)$, respectively, viewed as subspaces of $G_s$ and $G_u$. It is readily verified that 
$$V_{ij}\cap (G_s^{(0)}\times G_u^{(0)})=
\begin{cases}
\emptyset, \; &\mbox{if $i\neq j$},\\
\{((x_1,x_1),(x_1',x_1')): \; x_1\in X^u(g_i,\epsilon), \;x_1'\in X^s(g_i,\epsilon)\}, \; &\mbox{if $i= j$}.
\end{cases}$$
We can therefore compute using Equation \eqref{productform} that 
\begin{align*}
\tau_S\otimes\tau_U(p)&=\sum_{i=1}^K \int_{X^u(P)\times X^s(Q)} f_i([x,x'])^2\ \mathrm{d}(\mu_u^P\times \mu_s^Q)(x,x')\\
&=\sum_{i=1}^K \int_{X^u(g_i,\epsilon)\times X^s(g_i,\epsilon))} f_i([x,x'])^2\ \mathrm{d}(\mu_u^P\times \mu_s^Q)(x,x')\\
&=\sum_{i=1}^K \int_{[X^u(g_i,\epsilon), X^s(g_i,\epsilon))]} f_i(y)^2\ \mathrm{d}\mu(y)=\int_{X} \sum_{i=1}^K f_i(y)^2\ \mathrm{d}\mu(y)=1.
\end{align*}
In the last equality, we used that $\sum_{i=1}^K f_i^2=1$ which follows from the fact that $\{f_1,\ldots, f_K\}$ is a partition of unity.
\end{proof}

\begin{prop} 
\label{denseReal}
The range of $K_0({\rm \tau_H})$ is dense in $\R$.
\end{prop}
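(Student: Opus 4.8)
The plan is to exploit the commuting diagram \eqref{commdiagindprob} together with the transformation rules for the trace $\tau_S$ (equivalently $\tau_U$) under the dynamics, which in turn are controlled by the topological entropy $\log(\lambda)$ with $\lambda > 1$. The essential point is that conjugating by the implementing unitary of $\varphi$ scales $\tau_S$ by $\lambda^{-1}$, and this scaling is visible on $K$-theory, so the image of $K_0(\tau_H)$ must be invariant under multiplication by a fixed power of $\lambda$. Since $\lambda > 1$, a nontrivial subgroup of $\R$ invariant under multiplication by $\lambda$ cannot be discrete, hence must be dense.

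First I would recall that the homeomorphism $\varphi$ induces an automorphism of the stable groupoid $G_s(X,\varphi;P)$ (after possibly adjusting $P$ to a $\varphi$-invariant set, which changes nothing up to isomorphism by the Appendix of \cite{DY}), and hence an automorphism $\alpha$ of $S = C^*(G_s(X,\varphi;P))$; likewise an automorphism $\beta$ of $U$. From the definitions of $\tau_S$ and $\tau_U$ in Subsection \ref{tracialsubsection} and the transformation rules $\varphi^*\mu_u^P = \lambda\,\mu_u^P$ and $\varphi^*\mu_s^Q = \lambda^{-1}\mu_s^Q$, one reads off $\tau_S\circ\alpha = \lambda^{-1}\tau_S$ and $\tau_U\circ\beta = \lambda\,\tau_U$. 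At the level of $K$-theory, $\alpha_*$ and $\beta_*$ are automorphisms of $K_0(S)$ and $K_0(U)$, and therefore the subgroup $K_0(\tau_S)(K_0(S)) \subseteq \R$ satisfies $\lambda^{-1}\cdot K_0(\tau_S)(K_0(S)) = K_0(\tau_S)(\alpha_*(K_0(S))) = K_0(\tau_S)(K_0(S))$, i.e. it is invariant under multiplication by $\lambda^{\pm 1}$. The same holds for $K_0(\tau_U)(K_0(U))$.

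Next, by the previous proposition the Morita equivalence identifies $K_0(\tau_H)$ with $K_0(\tau_S)\otimes K_0(\tau_U)$ on $K_0(S\otimes U)\cong K_0(H)$, so the range of $K_0(\tau_H)$ is the subgroup of $\R$ generated by products $K_0(\tau_S)(x)\cdot K_0(\tau_U)(y)$ (more precisely, it is the image of $K_0(\tau_S)(K_0(S))\otimes_{\Z} K_0(\tau_U)(K_0(U))$ under multiplication, which contains $K_0(\tau_S)(K_0(S))\cdot K_0(\tau_U)(K_0(U))$). Since this range is a nonzero subgroup of $\R$ — it contains $\tau_H(1) = 1$ — and is invariant under multiplication by $\lambda$ (apply $\alpha_*$ in the first tensor factor), it cannot be cyclic: a nonzero subgroup of the form $c\Z$ is not preserved by multiplication by $\lambda$ when $\lambda>1$. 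A subgroup of $\R$ that is not cyclic is dense, so $K_0(\tau_H)(K_0(H))$ is dense in $\R$.

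The main obstacle I anticipate is the bookkeeping needed to justify the transformation rule $\tau_S\circ\alpha = \lambda^{-1}\tau_S$ rigorously: one must check that $\varphi$ genuinely induces a groupoid automorphism compatible with the choice of periodic set $P$, track how this interacts with the $C^*$-stability isomorphisms of \cite{DY}, and verify that $\tau_S$ transforms by the scalar $\lambda^{-1}$ and not by some other power (this depends on the normalization of the Bowen measure product formula \eqref{productform} and the convention for entropy). Once this scaling identity is in hand, the density conclusion is a short and standard argument about subgroups of $\R$ invariant under a dilation by a factor strictly greater than one.
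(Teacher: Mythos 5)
Your proposal is correct and follows essentially the same route as the paper: both use the automorphism of $S$ induced by $\varphi$, the scaling of $\tau_S$ by $\lambda^{\pm 1}$ with $\log(\lambda)$ the entropy, the identification of $\tau_H$ with $\tau_S\otimes\tau_U$ under Morita equivalence from the previous proposition, and the normalization $\tau_H(1)=1$ to conclude that the range is a nonzero subgroup of $\R$ stable under multiplication by $\lambda>1$, hence dense. The only cosmetic difference is the direction of the scaling ($\lambda$ versus $\lambda^{-1}$, immaterial) and that the paper phrases the last step as ``contains the subgroup generated by $\{\lambda^{-n}\}$'' rather than ``a non-cyclic subgroup of $\R$ is dense.''
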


\begin{proof}
Let $\alpha_S$ denote the automorphism on $S$ induced by $\varphi$. Then $\alpha_S \otimes  \mathrm{id}_U$ is an automorphism of $S\otimes U$. Moreover, since $\tau_S (\alpha_S(b))=\lambda \tau_S(b)$ for each $b \in {\rm Dom}(\tau_S)$, we have that for any $a$ with finite trace,
\[ (\tau_S \otimes \tau_U)((\alpha_S \otimes \mathrm{id}_U)(a)) = \lambda (\tau_S \otimes \tau_U)(a) \]
where $\log(\lambda)$ is the topological entropy. 

Using the previous proposition and the fact that $\tau_H[1_H]=1$, we have that the range of $K_0({\rm \tau_H})$ contains the subgroup of $\R$ generated by $\{ \lambda^{-n} \: | \: n \in \Z \}$. Since $\lambda>1$, the range of $K_0({\rm \tau_H})$ is dense in $\R$. 
\end{proof}

In our proofs concerning real rank zero, we will need a result of R\o rdam:

\begin{lemma} (see Corollary 7.3 in \cite{Ror}) \label{RorResult} \\
If $A$ is a simple, unital,  exact, $\mathcal{Z}$-absorbing $C^*$-algebra with unique trace $\tau$, then $A$ has real rank zero if and only if range of $K_0({\rm \tau})$ is dense in $\R$.
\end{lemma}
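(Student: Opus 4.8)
Although this is Corollary 7.3 of \cite{Ror}, let me outline the proof I would give. I would first record two standing remarks: since $A$ is $\mathcal{Z}$-absorbing it is infinite dimensional, and since it carries a trace it is stably finite and hence not purely infinite. The statement is an equivalence, so I would treat the two implications separately.

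For ``real rank zero $\Rightarrow$ dense range'' I would argue by contradiction. If $K_0(\tau)(K_0(A))$ were not dense in $\R$, then, being a subgroup of $\R$ containing $K_0(\tau)[1_A]=1$, it would equal $\tfrac1n\Z$ for some $n\in\N$; in particular there would be a projection $p\in A$ with $\tau(p)=\tfrac1n$ and no projection with trace strictly between $0$ and $\tfrac1n$. Then $pAp$ is simple, unital, has real rank zero, and by faithfulness of $\tau$ has no nontrivial projection, so every self-adjoint element of $pAp$ with finite spectrum is a scalar multiple of $p$; real rank zero would force $pAp=\C p$, making $p$ a minimal projection in the simple algebra $A$ and hence $A\cong M_n(\C)$, contradicting infinite dimensionality. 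Thus the range is dense.

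For the converse I would bring in the structure theory of \cite{Ror}: since $A$ is simple, unital, exact, $\mathcal{Z}$-absorbing and not purely infinite, it has stable rank one, and almost unperforation of its Cuntz semigroup (a consequence of $\mathcal{Z}$-absorption) gives strict comparison of positive elements by the unique trace, i.e. $a\precsim b$ whenever $d_\tau(a)<d_\tau(b)$, with $d_\tau$ the associated dimension function. To verify real rank zero I would fix a self-adjoint $a$ with $0\le a\le1$ and $\varepsilon>0$, choose a partition $0=t_0<t_1<\dots<t_m=1$ of mesh $<\varepsilon$ (after a slight perturbation ensuring enough spectral mass near each $t_k$) and a much smaller $\delta>0$, and use density of $K_0(\tau)(K_0(A))$ together with stable rank one to produce projections $p_1\ge p_2\ge\dots\ge p_m$ in $A$ whose traces are squeezed strictly between $d_\tau(\chi_{[t_k+\delta,1]}(a))$ and $d_\tau(\chi_{[t_k-\delta,1]}(a))$. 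Strict comparison then yields $\chi_{[t_k+\delta,1]}(a)\precsim p_k\precsim\chi_{[t_k-\delta,1]}(a)$ and stable rank one lets me realize these by genuine Murray--von Neumann subequivalences and arrange the $p_k$ nested inside $A$; the finite-spectrum self-adjoint $b:=t_1p_1+\sum_{k=2}^m(t_k-t_{k-1})p_k$ then satisfies $\|a-b\|\le\varepsilon+O(\delta)<2\varepsilon$, so $A$ has real rank zero.

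The hard part is the converse, and within it the step from a projection of the right \emph{trace} to a projection correctly \emph{positioned} relative to $a$: matching traces alone does not control $\|a-b\|$, so one genuinely needs strict comparison to convert the numerical trace inequalities into Cuntz comparisons with the spectral cutoffs of $a$, and stable rank one to realize those comparisons by nested projections in $A$ itself rather than in its stabilization. That both ingredients are available is exactly R\o rdam's dichotomy for simple, unital, exact, $\mathcal{Z}$-absorbing $C^*$-algebras; everything else is routine spectral bookkeeping.
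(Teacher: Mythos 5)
First, note that the paper does not prove this lemma at all: it is imported verbatim as Corollary~7.3 of \cite{Ror}, so you are reproving an external result rather than paralleling an argument in the text. Your forward implication is essentially sound, up to one small repair: non-density gives that the range of $K_0(\tau)$ is $\frac{1}{n}\Z$, but this is the subgroup generated by traces of projections in \emph{matrix algebras} over $A$, so a projection in $A$ itself of trace exactly $\frac{1}{n}$ is not immediate. Instead take a nontrivial projection of minimal positive trace (real rank zero and $A\neq\C$ yield nontrivial projections, and the achievable traces lie in the finite set $\frac{1}{n}\Z\cap[0,1]$); your hereditary-subalgebra argument then runs unchanged and produces the same contradiction with infinite-dimensionality.

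The converse, however, has a genuine gap exactly at the point you dismiss as ``routine spectral bookkeeping.'' From density, strict comparison and stable rank one you can indeed produce projections $p_k$ with $\chi_{[t_k+\delta,1]}(a)\precsim p_k\precsim\chi_{[t_k-\delta,1]}(a)$, and stable rank one lets you place $p_k$ inside the hereditary subalgebra $\overline{(a-t_k+\delta)_+A(a-t_k+\delta)_+}$, which gives the \emph{upper} positional bound $p_k\le\chi_{(t_k-\delta,1]}(a)$ in $A^{**}$. But the estimate $\|a-b\|\le\varepsilon+O(\delta)$ for $b=\sum_k(t_k-t_{k-1})p_k$ also requires the \emph{lower} positional bound $\chi_{[t_k+\delta,1]}(a)\le p_k$ in $A^{**}$ (equivalently, that $p_k$ act as a unit on $(a-t_k-\delta)_+$), and a Cuntz subequivalence $\chi_{[t_k+\delta,1]}(a)\precsim p_k$ gives nothing of the sort: a projection of the correct trace lying in that hereditary subalgebra can still be (almost) orthogonal to a fixed top spectral subspace of $a$, so that $\|(1-p_k)a\|$ remains large no matter how the $p_k$ are nested among themselves. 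Nesting controls the $p_k$ relative to each other, not relative to $a$. Producing projections genuinely squeezed between two spectral projections of $a$ in the order of $A^{**}$, rather than Cuntz-squeezed, is precisely the content of R\o rdam's Theorem~7.2 and the interpolation arguments it rests on; it does not follow from strict comparison plus stable rank one by the route you describe, so the ``if'' direction is not established as written.
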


\begin{theorem} 
\label{RRZeroHom}
Suppose $(X, \varphi)$ is a mixing Smale space and $H$, $S$ and $U$ be the associated homoclinic, stable and unstable algebras. Also let $H\rtimes \Z$ denote the homoclinic algebra crossed with the integer action induced by $\varphi$. Then $H$, $S$, $U$ and $H\rtimes \Z$ each have real rank zero.
\end{theorem}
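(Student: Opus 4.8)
The plan is to establish real rank zero for all four algebras by reducing each to an application of R\o rdam's Lemma \ref{RorResult}, using that the relevant corners are simple, unital, exact, $\mathcal{Z}$-absorbing with unique trace, and then verifying the density of the image of $K_0$ of the trace. First I would handle $H$ directly: it is simple, unital, separable, nuclear (hence exact), has finite nuclear dimension by \cite{DS} (so it is $\mathcal{Z}$-absorbing by the Toms--Winter type results), and has unique trace $\tau_H$ by \cite{Hou}; Proposition \ref{denseReal} says the range of $K_0(\tau_H)$ is dense in $\R$, so Lemma \ref{RorResult} applies and $H$ has real rank zero.

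For $S$ and $U$, the point is that real rank zero passes to and from stably isomorphic algebras, and by Theorem \ref{mainResult} together with Lemma \ref{stablyuni} we may write $S\cong A_s\otimes\mathbb{K}$ and $U\cong A_u\otimes\mathbb{K}$ with $A_s, A_u$ simple unital (Corollary \ref{corwithevery}). So it suffices to show $A_s$ (and $A_u$) has real rank zero. Again $A_s$ is simple, unital, exact, has finite nuclear dimension hence $\mathcal{Z}$-absorbing, and has a unique tracial state by Proposition \ref{uniqueTrace}. For the density condition I would relate $K_0$ of the trace on $A_s$ to that on $H$: since $S\otimes U\sim_M H$ and $A_s$ is a full corner of $S$, the induced trace pairings are compatible, and the scaling relation $\tau_S(\alpha_S(b))=\lambda\tau_S(b)$ from the proof of Proposition \ref{denseReal} forces the range of $K_0$ of the trace on $A_s$ to contain (a scalar multiple of) the group generated by $\{\lambda^{-n}\}$, which is dense since $\lambda>1$. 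Concretely I would argue that $(\tau_S\otimes\tau_U)_* = $ (up to scale) $\tau_{H,*}$ on $K_0$ under the Morita equivalence, and that $K_0(\tau_S)$ already has dense range on $K_0(S)$ because the automorphism $\alpha_S$ acts by multiplication by $\lambda$ on the trace while acting on $K_0(S)$, so one can invoke Lemma \ref{RorResult} for $A_s$.

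Finally for $H\rtimes\Z$: here I would use that $H\rtimes\Z$ is Morita equivalent (in fact isomorphic up to stabilization, by Takai-type duality / the standard crossed-product picture) to $S$ or $U$ — indeed the heteroclinic/homoclinic crossed product picture identifies $H\rtimes_\alpha\Z$ with (a stabilization of) the asymptotic algebra built from $S\otimes U$, and more directly one knows from Putnam's work that $H\rtimes\Z$ is stably isomorphic to one of the Ruelle algebras, which is in turn strongly Morita equivalent to $S$ (equivalently $U$). Since real rank zero is preserved under stable isomorphism and Morita equivalence, the real rank zero of $S$ established above transfers to $H\rtimes\Z$.

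The main obstacle I expect is the density of the range of $K_0(\tau)$ for the corners $A_s$ and $A_u$: Proposition \ref{denseReal} is stated only for $\tau_H$, and one must transport that density statement across the Morita equivalence $S\otimes U\sim_M H$ and then down to a full corner of $S$ alone (rather than $S\otimes U$), keeping careful track of which $K_0$-classes actually have finite trace and lie in the Pedersen ideal. A clean way around this is to observe that for the full corner $pSp$ of $S$, the unique tracial state is the normalization of $\tau_S$, and that $\tau_S$ is scaled by $\lambda$ under $\alpha_S$ which induces an automorphism of $K_0(pSp)\cong K_0(S)$; chasing the generator $[p]$ and its images under $\alpha_S^n$ shows the range contains the $\{\lambda^{\pm n}\}$-group up to scale, giving density. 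The identification of $H\rtimes\Z$ with a Ruelle algebra up to stable isomorphism is the other point requiring a citation rather than a new argument.
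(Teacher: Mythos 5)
Your treatment of $H$, $S$ and $U$ is essentially the paper's argument: apply R\o rdam's criterion (Lemma \ref{RorResult}) to $H$ directly and to a full corner $pSp$ of $S$, getting density of the range of $K_0$ of the trace from the scaling relation $\tau_S(\alpha_S(b))=\lambda\tau_S(b)$ with $\lambda>1$ exactly as in Proposition \ref{denseReal}. That part is fine.

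The gap is in your argument for $H\rtimes\Z$. You assert that $H\rtimes\Z$ is stably isomorphic to a Ruelle algebra and that the Ruelle algebra is strongly Morita equivalent to $S$. Both claims are false. The Ruelle algebras are the crossed products $S\rtimes\Z$ and $U\rtimes\Z$ of the \emph{stable and unstable} algebras by the automorphism induced by $\varphi$, not the crossed product of the homoclinic algebra; and by Putnam--Spielberg the Ruelle algebras of a mixing Smale space are simple and purely infinite, hence traceless, so they cannot be Morita equivalent to $S$, which carries the nonzero densely defined trace $\tau_S$ and is stably finite. (Indeed $H\rtimes\Z$ itself is not purely infinite: the Bowen measure is $\varphi$-invariant, so $\tau_H$ is invariant under the induced automorphism and extends to a tracial state on $H\rtimes\Z$.) The intended argument, which the paper only sketches, is to apply Lemma \ref{RorResult} directly to $H\rtimes\Z$: it is unital, nuclear, simple, $\mathcal{Z}$-stable, has a unique tracial state (the canonical extension of $\tau_H$, using uniqueness of $\tau_H$ and its $\varphi$-invariance), and the range of $K_0$ of that trace contains the range of $K_0(\tau_H)$, which is dense in $\R$ by Proposition \ref{denseReal}. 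Your proposed reduction would need to be replaced by this direct verification.
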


\begin{proof}
We begin with $H$ and show that the hypotheses of Lemma \ref{RorResult} are satisfied. The homoclinic groupoid is amenable so the associated $C^*$-algebra is nuclear and hence exact. That $(X,\varphi)$ is mixing implies that $H$ is simple is shown in \cite{PutSpi}. Finally $H$ is $\mathcal{Z}$-stable by the main result of \cite{DS}. Proposition \ref{denseReal} shows that the range of $K_0({\rm \tau_H})$ is dense in $\R$. Thus we can apply Lemma \ref{RorResult}, which implies that $H$ has real rank zero. 

Since the stable algebra of $(X, \varphi^{-1})$ is the unstable algebra of $(X, \varphi)$ we need only prove the result for $S$. Let $p\neq 0$ be a projection in $S$. Then by Proposition \ref{uniqueTrace} $pSp$ has a unique tracial state. Furthermore it is simple, unital, nuclear and $\mathcal{Z}$-stable so Lemma \ref{RorResult} reduces the proof to showing that the range of the trace is dense in $\R$. This follows as in the proof of Proposition \ref{denseReal} using the fact that $\tau_S (\alpha_S(b))=\lambda \tau_S(b)$ for each $b \in {\rm Dom}(\tau_S)$ where $\lambda>1$.

For $H\rtimes \Z$ the argument is similar, again one checks that the hypotheses Lemma \ref{RorResult} are satisfied; the details are omitted.
\end{proof}

Smale's decomposition theorem allows one to generalize many results from the mixing case to the nonwandering case. For completeness we include the details in the case of real rank zero.

\begin{theorem} 
\label{RRZeroNonWanCase}
Suppose $(X, \varphi)$ is a nonwandering Smale space and $H$, $S$ and $U$ be the associated homoclinic, stable and unstable algebras. Then each of these algebras have real rank zero.
\end{theorem}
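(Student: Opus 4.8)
The plan is to reduce the nonwandering case to the mixing case via Smale's spectral decomposition theorem. Recall that for a nonwandering Smale space $(X,\varphi)$, Smale's decomposition theorem (see \cite[Section 2]{PutFunProp}) writes the nonwandering set as a finite disjoint union of clopen $\varphi$-invariant pieces $X = X_1 \sqcup \cdots \sqcup X_m$, on each of which some power of $\varphi$ acts as a mixing Smale space. More precisely, each $X_i$ further decomposes into $N_i$ clopen sets cyclically permuted by $\varphi$, with $\varphi^{N_i}$ restricting to a mixing Smale space on each. The key point is that all three of the associated $C^*$-algebras decompose accordingly: $H(X,\varphi)$, $S(X,\varphi;P)$, and $U(X,\varphi;P)$ each split as finite direct sums (over the pieces $X_i$) of $C^*$-algebras, and each summand is (Morita equivalent to, or a matrix amplification of) the corresponding algebra of a mixing Smale space. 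This reduction is exactly the one invoked in \cite{DS} and \cite{PutFunProp}, so I would cite it rather than reprove it.

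First I would invoke the decomposition to write each of $H$, $S$, $U$ as a finite direct sum $A \cong \bigoplus_{i=1}^m A_i$, where each $A_i$ is the homoclinic (respectively stable, unstable) algebra built from a mixing Smale space — more carefully, from the mixing system $(X_{i,0}, \varphi^{N_i})$ on one of the cyclically permuted clopen pieces, with $A_i$ either equal to that algebra or to $M_{N_i}$ of it (for the homoclinic case) or stably isomorphic to it (for the stable/unstable case, where the relevant $C^*$-algebra of the period-$N_i$ system restricted to $X_i$ is Morita equivalent to that of the mixing subsystem). Second, I would observe that real rank zero passes to finite direct sums: $\mathrm{RR}(\bigoplus_i A_i) = \max_i \mathrm{RR}(A_i)$, so it suffices to check each summand. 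Real rank zero is also preserved under passing to matrix algebras and under stable isomorphism (for $\sigma$-unital algebras, which these are), so it is enough to know that the homoclinic, stable, and unstable algebras of each mixing subsystem $(X_{i,0},\varphi^{N_i})$ have real rank zero. Third, I would apply Theorem \ref{RRZeroHom} to each such mixing subsystem, concluding that every summand has real rank zero, hence so does each of $H$, $S$, $U$.

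The main obstacle, such as it is, is bookkeeping rather than mathematics: one must correctly identify how the stable, unstable, and homoclinic algebras decompose under Smale's theorem — in particular tracking the role of the cyclic permutation of the $N_i$ clopen components and the finite $\varphi$-invariant set $P$ of periodic points (which can be chosen to meet each component), and confirming that the resulting summands really are the $C^*$-algebras of mixing Smale spaces up to matrix amplification or Morita equivalence. All of this is standard and already used in \cite{DS, PutFunProp}, so in the write-up I would state the decomposition of the algebras as a citation, note the three stability properties of real rank zero (direct sums, matrix algebras, stable isomorphism), and then simply quote Theorem \ref{RRZeroHom}. No new estimates or constructions are needed.

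\begin{proof}
By Smale's decomposition theorem (see \cite[Section 2]{PutFunProp}), the nonwandering Smale space $(X,\varphi)$ decomposes into finitely many clopen $\varphi$-invariant pieces, each of which in turn is a finite union of clopen sets cyclically permuted by $\varphi$ with a power of $\varphi$ acting as a mixing Smale space. As explained in \cite[Section 2]{PutFunProp} and used in \cite{DS}, the homoclinic algebra $H$, and the stable and unstable algebras $S$ and $U$ (for a choice of finite $\varphi$-invariant set $P$ of periodic points meeting each piece), decompose as finite direct sums of $C^*$-algebras, where each summand is, up to matrix amplification and Morita equivalence, the homoclinic, stable, or unstable algebra of one of these mixing subsystems. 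Real rank zero passes to finite direct sums, to matrix algebras over a given algebra, and to $C^*$-algebras stably isomorphic to a given one; therefore it suffices to prove that the homoclinic, stable, and unstable algebras of each mixing subsystem have real rank zero. This is precisely Theorem \ref{RRZeroHom}. Hence $H$, $S$, and $U$ each have real rank zero.
\end{proof}
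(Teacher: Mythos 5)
Your proof is correct and follows essentially the same route as the paper: apply Smale's decomposition theorem to write each algebra as a finite direct sum coming from mixing subsystems, note that real rank zero respects direct sums (and, in your more careful version, matrix amplification and stable isomorphism), and invoke Theorem \ref{RRZeroHom}. The paper's own proof is just a terser version of this, asserting directly that $H$ is a finite direct sum of homoclinic algebras of mixing Smale spaces and omitting the bookkeeping you spell out.
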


\begin{proof}
We only give the details for the homoclinic algebra as the proof in each case is similar. The $C^*$-algebraic consequence of Smale's decomposition theorem is that $H$ is isomorphic to the finite direct sum of the homoclinic algebras associated to mixing Smale spaces, see \cite[Section 2]{PutFunProp}. Using the previous theorem and the fact that real rank zero respects direct sums, we have that $H$ has real rank zero.
\end{proof}

\end{document}